\newtheorem{theorem}{Theorem}[section]
\newtheorem{lemma}{Lemma}[section]
\newtheorem{corollary}[theorem]{Corollary}
\newtheorem{definition}{Definition}[section]
\newtheorem{remark}{Remark}[section]
\numberwithin{equation}{section}
\newenvironment{proof}[1][Proof]{\noindent\textbf{#1.} }{\hfill $\Box$}
 \makeatletter\setlength{\textwidth}{15.0cm}
\begin{document}
\author{Long-Jiang Gu,\ \  Hong-Rui Sun$\thanks{Corresponding
author}$
 \thanks{ Supported by the program for New Century Excellent Talents in
University(NECT-12-0246) and FRFCU(lzujbky-2013-k02).}
 \\
 \\
{\small  School of Mathematics and Statistics, Lanzhou University,}\\
{\small Lanzhou, Gansu 730000, P.R. China }\\
}
\title{\textbf{\Large Infinitely many small energy solutions of a semilinear $\mathbf{Schr\ddot{o}dinger}$ equation
  }} \maketitle

\begin{abstract}
In this paper we prove the existence of infinitely many small energy solution of a semilinear $Schr\ddot{o}dinger$ equation via
the dual form of the generalized fountain theorem. This equation is with periodic potential and concave-convex nonlinearities.
\end{abstract}

\section{Introduction}
\noindent

 In recent years, strongly indefinite problems have attracted many
 authors' attention. Early in 1998, Kryszewski and Szulkin built the
 generalized linking theorem which is a powerful tool to study the strongly indefinite problems,
 see chapter 6 of \cite{MW} and \cite{SK}.
Using the similar method Batkam and Colin built the generalized fountain theorem and its dual form
 in\cite{BC1,BC2,BC4,BC5} which may be used to find infinitely many large and small energy solutions of
 strongly indefinite problems.

In \cite{He} (see\cite{G}as well)£¬ Barstch and Willem firstly studied the elliptic equation with concave and convex nonlinearities and proved
the exintence of infinitely many small energy solutions via the dual fountain theorem. In \cite{BC4} the authors discussed
the strongly indefinite elliptic systems with concave and convex nonlinearities defined on a bounded domain. A natural question is if we can get
similar results to $Schr\ddot{o}dinger$ equation with periodic potential. At this time the problem may be
strongly indefinite and there does not exist the embedding from $H^{1}(\mathbb{R}^{N})$ to $L^{q}(\mathbb{R}^{N})$
when $1<q<2$. So we discuss the following equation
\begin{equation}\label{P}
      \left\{
   \begin{array}{ll}
     -\Delta u + V(x)u=g(x)| u|^{q-2}u-h(x)| u|^{p-2}u,\\
     u\in H^{1}(\mathbb{R}^{N}) , N\geq3 ,
   \end{array}
   \right.
\end{equation}
whose nonlinearity is with a weight and the weight is of appropriate
attenuation such that it may bring us the embeddings we need. 

In quantum mechanics, the $Schr\ddot{o}dinger$ equation is used to depict the motion law of microscopic particles. The nonlinearity of the $Schr\ddot{o}dinger$ equation means the interaction of two particles. And $g(x)>0$ means
the interaction of two particles mainly acts as attraction when the energy of the particles is small. The weight $g(x)$ looks like a permittivity which means the medium in the space is not so well-distributed and thus leads to that the attraction between two particles becomes weak when they are far away from the origin. The existence of nontrivial solutions shows that these two particles will concentrate at where they can attract each other and form a stable state.

In both \cite{SK} and \cite{BC2} the authors demanded the
nonlinearity to be 1-periodic as well, this condition make the energy functional to be
invariant under the action of group $\mathcal{Z}_{N}$ thus is of benefit to the proof of the nontriviality of solutions.
However, in our work we can not expect the nonlinearity with such a weight to be periodic and we get the
nontriviality of solutions through the $(PS)_{c}$ condition if the weight $h(x)$ attenuates quickly enough so that it provides us the
 compact embedding. If $h(x)$ attenuates slowly we use a variant concentrating-compactness lemma noticing that the solution can not concentrate at $\infty$ when the weight vanishes at $\infty$.


\section{Preliminary}
\indent

In this section we first introduce the abstract critical point theorems which we will need. 

  Let Y be a closed subspace of a separable Hilbert space X
endowed with the usual inner product $(\cdot)$ and the associated norm $\| \cdot \|$.
 We denote by P :$ X$ $\longrightarrow$ $Y$ and  $Q : X \longrightarrow$ $Z =   Y^{\perp}$ the
 orthogonal projections.

  We  fix  an  orthonormal  basis $\{e_{j} \}_{j\geq0}$  of  $Y$ and
  an  orthonormal  basis $\{f_{j} \}_{j\geq0}$  of  $Z$,
 and we consider on $X$ = $Y \oplus Z$   the $\tau$- topology
introduced by Kryszewski and Szulkin in \cite{SK} that is, the topology associated to the following
norm
\begin{center}
    $\| u \|_{\tau}:=
      max\{\sum_{j=0}^{\infty}\frac{1}{2^{j+1}}|(Pu,e_{j})|,
      \| Qu \|\}$ ,  $u\in X$ .
\end{center}
It is easy to see that if $\{u_{n}\}\subset X$ is a bounded sequence, then
 \begin{center}
$u_{n}\rightarrow u$ in $\tau$- topology $\Leftrightarrow$
 $Pu_{n}\rightharpoonup Pu$ and $Qu_{n}\rightarrow Qu$.
 \end{center}

For readers' convenience, we recall the following well-known definitions.

\begin{definition}
Let $\varphi \in C^{1}(X,\mathbb{R})$ and $c \in \mathbb{R}$.

\begin{enumerate}
  \item $\varphi$ is $\tau$-upper$($resp.$\tau$-lower$)$semicontinuous if
for every $C\in\mathbb{R}$ the set \{$u\in X$;$\varphi(u)\geq C$\}$($resp.\{$u\in X$;$\varphi(u)\leq C$\}$)$ is $\tau$-closed.
  \item $\varphi'$ is weakly sequentially continuous if the sequence $\{\varphi'(u_{n})\}$
converges weakly to $\varphi'(u)$ whenever $\{u_{n}\}$ converges weakly to u in X.
  \item $\varphi$ satisfies the Palais-Smale condition((PS) condition for short) if any
sequence $\{u_{n}\}\subset X$ such that $\{\varphi(u_{n})\}$ is bounded and $\varphi'(u_{n})\rightarrow 0$,
has a convergent subsequence.
  \item  $\varphi$ is said to satisfy the Palais-Smale condition at level c
($(PS)_{c}$ condition for short) if any sequence $\{u_{n}\}\subset X$ such that
\begin{center}
     $\varphi(u_{n})\rightarrow c$  and $ \varphi'(u_{n})\rightarrow 0$
\end{center}
has a convergent subsequence.
\end{enumerate}

\end{definition}

Now we introduce the dual form of the generalized fountain theorem
built by Batkam and Colin in \cite{BC4}. This theorem is the key to find small energy solutions.




We adopt the following notations:
\begin{center}
$Y^{k}:=\overline{\bigoplus_{j=k}^{\infty}\mathbb{R}e_{j}}$
and
$Z^{k}:=(\bigoplus_{j=0}^{k}\mathbb{R}e_{j})\bigoplus Z$ .
\end{center}

\begin{theorem}
Let $\varphi \in C^{1}(X,\mathbb{R})$ be an even functional which
is $\tau$-lower semicontinuous and such that $\varphi'$ is weakly sequentially continuous.
If there exist a $k_{0}>0$ such that for every $k\geq k_{0}$ there exists $\sigma_{k}>s_{k}>0$ such that:
\\$(B_{1})$ $a^{k}:=inf_{u\in Y^{k},\|u\|=\sigma_{k}}\varphi(u)\geq 0$,
\\$(B_{2})$ $b^{k}:=sup_{u\in Z^{k},\|u\|=s_{k}}\varphi(u)<0$,
\\$(B_{3})$ $d^{k}:=inf_{u\in Y^{k},\|u\|\leq \sigma_{k}}\varphi(u)\rightarrow 0,k\rightarrow\infty$.
\\Then there exists a sequence $\{u_{k}^{n}\}$ such that
\begin{center}
    $\varphi'(u_{k}^{n})\rightarrow 0$ and $\varphi(u_{k}^{n})\rightarrow c_{k}$ as $n\rightarrow\infty$,
\end{center}
where $c_{k}\rightarrow0$.
\end{theorem}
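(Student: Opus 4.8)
\noindent The plan is to run the dual fountain theorem of \cite{He} inside the $\tau$-topology, following Kryszewski--Szulkin \cite{SK} and Batkam--Colin \cite{BC4}; since no compactness condition is assumed here, the Palais--Smale sequence will be squeezed out of a minimax characterization by a deformation argument. Fix $k\ge k_0$. By $(B_2)$ the sphere $N^k:=\{u\in Z^k:\|u\|=s_k\}$ and the associated ball lie at a strictly negative level, while by $(B_1)$ the sphere $\{u\in Y^k:\|u\|=\sigma_k\}$ lies at level $\ge0$. Following \cite{BC4}, I would take the class $\Gamma_k$ of odd maps $\gamma$ of a ball in $Z^k$ into $X$ that are $\tau$-continuous, equal to the identity on the boundary sphere, and of the admissible form (identity plus a $\tau$-continuous map carrying bounded sets into precompact sets) needed for the Kryszewski--Szulkin generalized degree; then, removing from the ball symmetric subsets of controlled genus as in the classical dual fountain theorem, I define $c_k$ as the infimum over all such data of $\sup\,\varphi\circ\gamma$ over the truncated ball. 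Since $\varphi$ is even and $\tau$-lower semicontinuous, $c_k$ is well defined.

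The next step is to localize $c_k$. The generalized Borsuk--Ulam/intersection lemma of \cite{SK,BC4} — available because of the admissibility built into $\Gamma_k$ and because the finite part of $Z^k$ exceeds the codimension of $Y^k$ by exactly one dimension (the remaining, infinite part of $Z$ being absorbed by the $\tau$-degree) — shows that the truncated image of every admissible $\gamma$ still meets $\{u\in Y^k:\|u\|\le\sigma_k\}$; combined with $(B_3)$ this gives $d^k\le c_k$. On the other hand, testing against a well-chosen admissible pair supported near $N^k$ and using $(B_2)$ gives $c_k<0$. Hence $d^k\le c_k<0$ for every $k\ge k_0$, and $(B_3)$ forces $c_k\to 0$ as $k\to\infty$.

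Finally I would produce the $(PS)_{c_k}$ sequence by contradiction. If no such sequence existed, there would be $\alpha,\varepsilon>0$ with $\|\varphi'(u)\|\ge\alpha$ whenever $|\varphi(u)-c_k|\le2\varepsilon$, and one may shrink $\varepsilon$ so that the band $[c_k-2\varepsilon,c_k+2\varepsilon]$ avoids the $\varphi$-values of the boundary sphere used to build $\Gamma_k$ (using $c_k<0$ together with $(B_2)$). The $\tau$-version of the quantitative deformation lemma — valid precisely because $\varphi$ is $\tau$-lower semicontinuous and $\varphi'$ is weakly sequentially continuous — then provides an odd, $\tau$-continuous, admissible map $\eta$ that equals the identity where $|\varphi-c_k|>2\varepsilon$ and sends $\{\varphi\le c_k+\varepsilon\}$ into $\{\varphi\le c_k-\varepsilon\}$. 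Composing $\eta$ with a near-optimal $\gamma\in\Gamma_k$ keeps us inside $\Gamma_k$ (the composition is still odd, $\tau$-continuous, the identity on the boundary sphere, and admissible) but lowers $\sup\,\varphi\circ(\eta\circ\gamma)$ over the truncated ball below $c_k$, contradicting the definition of $c_k$. This yields the desired $\{u_k^n\}_n$ with $\varphi'(u_k^n)\to0$ and $\varphi(u_k^n)\to c_k$, and $c_k\to0$ was already shown.

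The main obstacle is entirely the $\tau$-topology bookkeeping: building the generalized degree and the deformation so that every competing map stays odd, $\tau$-continuous, equal to the identity on the boundary, and of the ``identity plus compact-on-bounded-sets'' form while the negative subspace $Z=Y^{\perp}$ is infinite-dimensional, and making the genus truncation compatible with all of this. This is exactly the point at which the two structural hypotheses on $\varphi$ are used; once the Kryszewski--Szulkin--Batkam--Colin machinery is granted, the inequalities $d^k\le c_k<0$ and the passage $c_k\to0$ are routine.
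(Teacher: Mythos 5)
First, a point of reference: the paper does not prove Theorem 2.1 at all --- it is quoted from Batkam and Colin \cite{BC4} --- so there is no internal proof to compare your argument with; your proposal has to stand on its own as a reconstruction of the cited machinery. It does name the right toolbox (Kryszewski--Szulkin admissible maps and $\tau$-degree as in \cite{SK}, an odd intersection lemma, a $\tau$-deformation lemma), but the places where you are specific are exactly the places where the argument breaks or is left unsupported.

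Concretely: (i) you start by asserting that by $(B_2)$ the sphere $N^k$ ``and the associated ball'' lie at a strictly negative level; $(B_2)$ controls only the sphere $\|u\|=s_k$, and nothing in $(B_1)$--$(B_3)$ makes $\varphi$ negative anywhere else in that ball (in the application $\varphi(0)=0$). This matters for your minimax: every competitor $\gamma$ is odd, hence $\gamma(0)=0$, and a truncation by sets of controlled genus cannot delete a whole neighbourhood of the origin, so with your inf--sup over the truncated ball the key inequality $c_k<0$ does not follow from ``testing a well-chosen pair supported near $N^k$''; obtaining $c_k\le b^k<0$ is precisely the delicate point of the dual geometry, and the standard dual normalization obtains it the other way around (a sup--inf over admissible maps of the $\sigma_k$-ball of $Y^k$ fixing its boundary, the intersection with the $s_k$-sphere of $Z^k$ forcing the infimum below $b^k$, and $\gamma=\mathrm{id}$ giving $c_k\ge d^k$), not via your construction. (ii) The deformation step is mismatched with the hypotheses: the Kryszewski--Szulkin construction of a $\tau$-continuous, admissible deformation that \emph{decreases} $\varphi$ uses $\tau$-\emph{upper} semicontinuity; here $\varphi$ is $\tau$-\emph{lower} semicontinuous, which naturally produces \emph{increasing} deformations (equivalently, the usc machinery applied to $-\varphi$), so the lemma you invoke (``sends $\{\varphi\le c_k+\varepsilon\}$ into $\{\varphi\le c_k-\varepsilon\}$, valid precisely because $\varphi$ is $\tau$-lower semicontinuous'') is not what this hypothesis yields, yet it is what your inf--sup scheme requires. (iii) Even granting an intersection lemma, to get $c_k\ge d^k$ you must place the intersection point inside the ball of radius $\sigma_k$ in $Y^k$ (or invoke $(B_1)$ when the image crosses that sphere), and you must verify that oddness, the boundary condition, admissibility and the genus truncation all survive composition with the deformation; all of this is asserted rather than argued. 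So the proposal is a plausible roadmap, but the bound $c_k<0$, the correctly oriented $\tau$-deformation lemma, and the intersection lemma for your truncated class are genuine gaps rather than routine bookkeeping.
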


In order to apply these abstract theory to elliptic systems restrict to a bounded domain $\Omega\subset\mathbb{R}^{n}$ when there is the compact embeddings,
Batkam and Colin introduced the theorems with $(PS)$ condition
 \cite{BC4} Thm 6 .

\begin{corollary}
Under the assumptions of Theorem 2.1, $\varphi$ satisfies in addition:
\\$(B_{4})$ $\varphi$ satisfies the $(PS)_{c}$condition, for all $ c \in [d^{k_{0}},0]$.
Then $\varphi$ has a sequence of critical points $\{u_{k}\}$ such that
$\varphi(u_{k})<0$ and $\varphi(u_{k})\rightarrow 0$ as $k\rightarrow\infty$.
\end{corollary}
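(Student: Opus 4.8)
The strategy is to combine the Palais--Smale sequences produced by Theorem 2.1 with the compactness condition $(B_4)$, using the standard mechanism that converts a minimax value into a genuine critical point; the only non-mechanical part is checking that the levels $c_k$ fall inside the window $[d^{k_0},0)$ on which $(PS)_c$ is assumed.

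First I would apply Theorem 2.1: for every $k\ge k_0$ it produces a sequence $\{u_k^n\}_{n\ge 0}\subset X$ with $\varphi'(u_k^n)\to 0$ and $\varphi(u_k^n)\to c_k$ as $n\to\infty$, where $c_k\to 0$ as $k\to\infty$. I would then read off from the minimax definition of $c_k$ inside the proof of Theorem 2.1 in \cite{BC4} the sharper two-sided estimate $d^k\le c_k\le b^k$. By $(B_2)$ this forces $c_k\le b^k<0$, so each $c_k$ is strictly negative; combined with $c_k\ge d^k\to 0$ from $(B_3)$ it also reconfirms $c_k\to 0$. Finally one checks $d^{k_0}\le c_k$ for all $k\ge k_0$ (this is built into the nested structure of the minimax families, with $Y^{k+1}\subset Y^k$), so that $c_k\in[d^{k_0},0)$; even if one did not wish to track this, the fact that $c_k\to 0$ with $c_k<0$ already places $c_k$ in $[d^{k_0},0)$ for all sufficiently large $k$, which is enough to obtain infinitely many solutions.

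Now fix $k\ge k_0$. The sequence $\{u_k^n\}_n$ is a Palais--Smale sequence for $\varphi$ at the level $c_k\in[d^{k_0},0]$, so by $(B_4)$ it has a subsequence $u_k^{n_j}$ converging strongly in $X$ to some $u_k$. Since $\varphi\in C^1(X,\mathbb{R})$, the functional $\varphi$ and its derivative $\varphi'$ are continuous, so passing to the limit yields $\varphi(u_k)=c_k$ and $\varphi'(u_k)=0$; hence $u_k$ is a critical point of $\varphi$ with critical value $c_k$. Because $c_k<0=\varphi(0)$ each $u_k$ is nontrivial, and because $c_k\to 0$ we get $\varphi(u_k)\to 0$ as $k\to\infty$, which is the assertion of the Corollary. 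I do not expect any analytic difficulty here: this is a soft consequence of Theorem 2.1. The one point that needs care is the bookkeeping in the previous paragraph --- that the $c_k$ are genuinely negative and bounded below by $d^{k_0}$, not merely convergent to $0$ --- and for that the bare statement ``$c_k\to 0$'' has to be supplemented by the two-sided bound $d^k\le c_k\le b^k$ coming from how $c_k$ is constructed.
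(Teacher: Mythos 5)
Your proposal is correct, and there is in fact nothing in the paper to compare it against: the corollary is stated without proof, being quoted from Batkam and Colin (\cite{BC4}, Theorem 6), and your argument is precisely the standard deduction given in that source — apply Theorem 2.1, locate the levels $c_k$ in the window where $(PS)_c$ is assumed, extract a strongly convergent subsequence, and pass to the limit using the continuity of $\varphi$ and $\varphi'$. You correctly identify the only real subtlety: the statement of Theorem 2.1 as reproduced here asserts only $c_k\rightarrow 0$, so the facts that $c_k<0$ and $c_k\in[d^{k_0},0]$ must be imported from the minimax construction in \cite{BC4}, where $d^{k}\leq c_k\leq b^{k}<0$ is established, rather than read off the bare statement. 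Two minor caveats on your bookkeeping: the inequality $d^{k}\geq d^{k_0}$ does not follow from $Y^{k}\subset Y^{k_0}$ alone — one also needs the balls to be nested, i.e. $\sigma_k\leq\sigma_{k_0}$, which is not guaranteed in general — so your fallback (since $c_k\rightarrow 0$ with $c_k\leq b^k<0$, one has $c_k\in(d^{k_0},0)$ for all large $k$, which still yields infinitely many critical points with the stated properties) is the safer route; note only that it tacitly assumes $d^{k_0}<0$, the degenerate case $d^{k_0}=0$ being excluded in practice (and immaterial for the applications in Section 3, where $d^{k}<0$ for every $k$). With that understood, the proof is complete and sound.
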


  Throughout this paper,  $H^{1}(\mathbb{R}^{N})$ is the standard
 Sobolev space with the norm $\| u \|=\int_{\mathbb{R}^{N}}
 (|u |^{2} + | \nabla u|^{2}) dx$.
 $L^{p}(a(x),\mathbb{R}^{N})$ is the Lebesgue space with positive weight $a(x)$
 endowed with the norm $\| u \|_{L^{p}(a(x),\mathbb{R}^{N})}:=
 (\int_{\mathbb{R}^{N}}| u |^{p} a(x) dx)^{\frac{1}{p}}$.
 By $B(x,r)$ we denote the ball centered at $x$ with radius $r$ .
And the positive constants whose exact value are not important will be denoted by $C$ only.

\section{Main results}
\noindent

In this section,  we discuss the existence of infinitely many small energy solutions of problem (1.1) with the dual form of the generalized fountain theorem and our basic assumptions are:
\begin{description}
  \item[$(H_{1})$] $1<q<2<p<2^{\ast}$, where $2^{\ast}=\frac{2N}{N-2}$.
  \item[$(H_{2})$] The function $V(x):\mathbb{R}^{N}\rightarrow \mathbb{R}$ is continuous and 1-periodic in $x_{1},...,x_{N}$ and 0 lies in a gap of the spectrum of $-\Delta+V$.
  \item[$(H_{3})$] $g\in L^{q_{0}}(\mathbb{R}^{N})\bigcap L^{\infty}(\mathbb{R}^{N})$ with $g(x)>0$, a.e. in $\mathbb{R}^{N}$,
where $q_{0}=\frac{2N}{2N-qN+2q}$.
  \item[$(H_{4})$] $h\in L^{p_{0}}(\mathbb{R}^{N})\bigcap L^{\infty}(\mathbb{R}^{N})$ with $h(x)\geq0$, a.e. in $\mathbb{R}^{N}$,
where $p_{0}=\frac{2N}{2N-pN+2p}$.
  \item[$(H_{4})'$] $h(x)\in L^{\infty}(\mathbb{R}^{N})$ with $h(x)\geq0$, and $h(x)\rightarrow0$ as $|x|\rightarrow\infty$.
\end{description}

\begin{remark}
$(H_{3})$ and $(H_{4})$ mean that the weight of the nonlinearity is of appropriate attenuation so that we can get continuous and  compact embeddings from $H^{1}(\mathbb{R}^{N})$ to the Lebesgue space with weight.

\end{remark}

\begin{remark}
The condition $(H_{4})'$ is weaker than $(H_{4})$. In fact we may construct an example as
$h(x)=\frac{1}{log|x|}$, $|x|>R$, for some $R>0$.

\end{remark}

The nonlinearity of the $Schr\ddot{o}dinger$ equation means the interaction of two particles. And $g(x)>0$ means
the interaction of two particles mainly acts as attraction when the energy of the particles is small.
The weight $g(x)$ looks like a permittivity which means the medium
in the space is not so well-distributed and thus leads to that the attraction between two particles becomes weak when they are
far away from the origin. So our conclusion shows that these two particles will concentrate at where they can attract each other.


Let us introduce the variational setting first and for more information we refer the readers to \cite{SK,BC1,BC2}.

We define a functional $\varphi$ on $H^{1}(\mathbb{R}^{N})$ as
\begin{equation}
 \varphi(u):=\frac{1}{2}\int_{\mathbb{R}^{N}}(  | \nabla u|^{2}+V(x)| u |^{2}) dx
    - \frac{1}{q}\int_{\mathbb{R}^{N}} g(x)| u | ^{q}dx
     +\frac{1}{p}\int_{\mathbb{R}^{N}} h(x)| u | ^{p}dx.
\end{equation}

It is easy to see from condition $(H_{1})$, $(H_{2})$, $(H_{3})$ and $(H_{4})$ or $(H_{4}')$ that $\varphi (u)$ is well defined
and is of class $\mathcal{C}^{1}$, then its critical points are weak solutions of $(3.1)$.
Moreover, any $u\in H^{1}(\mathbb{R}^{N})$ satisfies that 
 if $\phi\in H^{1}(\mathbb{R}^{N})$  then there holds
\begin{equation}
\langle \varphi' (u),\phi\rangle=\int_{\mathbb{R}^{N}}\nabla u\nabla\phi dx
+\int_{\mathbb{R}^{N}}V(x)u \phi dx-\int_{\mathbb{R}^{N}}g(x)| u |^{q-2}u\phi dx
+\int_{\mathbb{R}^{N}}h(x)| u |^{p-2}u\phi dx.
\end{equation}

By $(H_{2})$, the $Schr\ddot{o}dinger$ operator $-\Delta+V(x)$ in $L^{2}(\mathbb{R}^{N})$
has purely continuous
spectrum, and the space $H^{1}(\mathbb{R}^{N})$ can be decomposed into
 $H^{1}(\mathbb{R}^{N})=Y\bigoplus Z$
such that the quadratic form:
\begin{equation}
u\in H^{1}(\mathbb{R}^{N}) \rightarrow \int_{\mathbb{R}^{N}}
 (  | \nabla u|^{2}+V(x)| u |^{2}) dx
\end{equation}
is positive and negative definite on $Y $and $ Z$ respectively and both $Y$ and $Z$
are infinite-dimensional.

Now let $L: H^{1}(\mathbb{R}^{N})\rightarrow H^{1}(\mathbb{R}^{N})$ be the self-adjoint operator
defined by
\begin{equation}
 (Lu,v)_{1}:=\int_{\mathbb{R}^{N}}(\nabla u \nabla v +V(x)uv)dx ,
\end{equation}
where $( \cdot )_{1}$ is the usual inner product in $H^{1}(\mathbb{R}^{N})$.

We denote $P :X\rightarrow Y$ and $Q :X \rightarrow Z $ the orthogonal projections,
and thus we can introduce a new inner product which is equivalent to $(\cdot)_{1}$ by the formula
\begin{center}
    $(u,v):=(L(Qu-Pu),v)_{1}$ , $u,v \in X$
\end{center}
and in this section $\| \cdot\|$ denotes the corresponding norm
\begin{center}
 $\| u \|:=(u,u)^{\frac{1}{2}}$   .
\end{center}
Since the inner products $ (\cdot)$  and $(\cdot)_{1}$ are equivalent,
$Y$ and $Z$ are also orthogonal with respect to $ (\cdot)$.
One can verify easily that $(3.2)$ reads
\begin{equation}
 \varphi(u):=\frac{1}{2}(\| Pu \|^{2}-\| Qu \|^{2})
    - \frac{1}{q}\int_{\mathbb{R}^{N}} g(x)| u | ^{q}dx
     +\frac{1}{p}\int_{\mathbb{R}^{N}} h(x)| u | ^{p}dx.
\end{equation}

In this section, we set
\begin{center}
    $Y_{k}:=\overline{\bigoplus^{\infty}_{j=k}e_{j}}$
and
$ Z_{k}:=[\bigoplus_{j=0}^{k}e_{j}]\bigoplus Z$,
\end{center}
where $\{e_{j}\}_{j\geq0}$ is an orthonormal basis of $(Y,\parallel\cdot\parallel)$.
\\

Our main results in this section are
\begin{theorem}
Assume that the conditions $(H_{1})$,$(H_{2})$,$(H_{3})$ and $(H_{4})$ hold. Then
problem $(1.1)$ has a sequence of nontrival solutions \{$u_{k}$\} with $\varphi(u_{k})< 0$,
and $\varphi(u_{k})\rightarrow 0$ as $k\rightarrow \infty$ .
\end{theorem}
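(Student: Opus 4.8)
The plan is to apply Corollary 2.2 to the functional $\varphi$ in $(3.6)$, i.e.\ to check hypotheses $(B_1)$--$(B_4)$. The first step is to record the two weighted embeddings. Writing $q_0'=q_0/(q_0-1)$ and $p_0'=p_0/(p_0-1)$, one checks from $(H_1)$ that $qq_0'=pp_0'=2^*$, so Hölder's inequality combined with $(H_3)$, $(H_4)$ and the Sobolev inequality gives the continuous embeddings $H^1(\mathbb{R}^N)\hookrightarrow L^q(g,\mathbb{R}^N)$ and $H^1(\mathbb{R}^N)\hookrightarrow L^p(h,\mathbb{R}^N)$; in particular $\int_{\mathbb{R}^N}g|u|^q\,dx\le C\|u\|^q$ and $\int_{\mathbb{R}^N}h|u|^p\,dx\le C\|u\|^p$. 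Splitting $\mathbb{R}^N$ into a ball $B(0,R)$, where the Rellich theorem applies and $g,h\in L^\infty$, and its complement, where $\|g\|_{L^{q_0}(B(0,R)^c)}$ and $\|h\|_{L^{p_0}(B(0,R)^c)}$ are small for $R$ large, upgrades both to compact embeddings. From this one gets in the standard way that $\varphi\in C^1(H^1(\mathbb{R}^N),\mathbb{R})$ is even, that $\varphi'$ is weakly sequentially continuous, and that $\varphi$ is $\tau$-lower semicontinuous: on a sublevel set $\{\varphi\le M\}$ any $\tau$-convergent sequence is automatically bounded (the bound on $\|Qu_n\|$ forces, since $q<2$, a bound on $\|Pu_n\|$), after which one passes to the limit using weak lower semicontinuity of $u\mapsto\|Pu\|^2$ and weak continuity of the two weighted integrals.

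For $(B_1)$ and $(B_3)$, set $\beta_k:=\sup\{\|u\|_{L^q(g,\mathbb{R}^N)}:u\in Y_k,\ \|u\|=1\}$; since the subspaces $Y_k$ decrease to $\{0\}$ and $H^1(\mathbb{R}^N)\hookrightarrow L^q(g,\mathbb{R}^N)$ is compact, $\beta_k\to0$. For $u\in Y_k$ one has $Pu=u$, $Qu=0$, $h\ge0$, hence $\varphi(u)\ge\frac12\|u\|^2-\frac1q\beta_k^q\|u\|^q$. Choosing $\sigma_k:=(\gamma\beta_k^q)^{1/(2-q)}$ with $\gamma$ a fixed large constant gives $a^k\ge\frac14\sigma_k^2\ge0$ and $\sigma_k\to0$, while minimizing $t\mapsto\frac12t^2-\frac1q\beta_k^qt^q$ over $[0,\sigma_k]$ gives $0\ge d^k\ge-(\tfrac1q-\tfrac12)\beta_k^{2q/(2-q)}\to0$.

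The delicate condition is $(B_2)$, since $Z_k=(\bigoplus_{j=0}^k e_j)\oplus Z$ is infinite dimensional and the equivalence-of-norms trick of the classical dual fountain theorem is unavailable here. I would argue by a dichotomy for $u\in Z_k$ with $\|u\|=s_k$, writing $u=Pu+Qu$ with $Pu$ in the finite dimensional space $F_k:=\bigoplus_{j=0}^k e_j$. If $\|Pu\|<\tfrac12 s_k$, then $\|Qu\|^2>\tfrac34 s_k^2$ and, discarding the nonpositive $g$-term, $\varphi(u)\le-\tfrac14 s_k^2+Cs_k^p<0$ for $s_k$ small since $p>2$. If $\|Pu\|\ge\tfrac12 s_k$, I use the fact that
\[
m_k:=\inf\Big\{\int_{\mathbb{R}^N}g|u|^q\,dx:\ u\in Z_k,\ \|u\|\le1,\ \|Pu\|\ge\tfrac12\Big\}>0,
\]
which holds because the constraint set is weakly compact ($F_k$ being finite dimensional, $u\mapsto\|Pu\|$ is weakly continuous on $Z_k$) while $u\mapsto\int_{\mathbb{R}^N}g|u|^q\,dx$ is weakly continuous and vanishes only at $u=0$, which is excluded; by homogeneity $\int_{\mathbb{R}^N}g|u|^q\,dx\ge m_ks_k^q$, so $\varphi(u)\le\tfrac12 s_k^2-\tfrac{m_k}{q}s_k^q+Cs_k^p<0$ for $s_k$ small, since $q<2<p$. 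Taking $s_k<\sigma_k$ small enough for both cases yields $(B_2)$.

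For $(B_4)$ I would prove $\varphi$ satisfies $(PS)_c$ for every $c$, in particular on $[d^{k_0},0]$. Given $\varphi(u_n)\to c$ and $\varphi'(u_n)\to0$, testing with $u_n$ gives $(\tfrac2q-1)\int_{\mathbb{R}^N}g|u_n|^q\,dx+(1-\tfrac2p)\int_{\mathbb{R}^N}h|u_n|^p\,dx=\langle\varphi'(u_n),u_n\rangle-2\varphi(u_n)\le o(1)\|u_n\|+C$, so both weighted integrals are $\le o(\|u_n\|)+C$; testing with $Pu_n-Qu_n$ gives $\|u_n\|^2=\langle\varphi'(u_n),Pu_n-Qu_n\rangle+\int_{\mathbb{R}^N}g|u_n|^{q-2}u_n(Pu_n-Qu_n)\,dx-\int_{\mathbb{R}^N}h|u_n|^{p-2}u_n(Pu_n-Qu_n)\,dx$, and bounding the last two integrals, by Hölder in the weighted spaces, by $C(\int_{\mathbb{R}^N}g|u_n|^q\,dx)^{(q-1)/q}\|u_n\|$ and $C(\int_{\mathbb{R}^N}h|u_n|^p\,dx)^{(p-1)/p}\|u_n\|$, and using the previous bound with the exponents $(q-1)/q<1$, $(p-1)/p<1$, one reaches $\|u_n\|\le o(1)+o(\|u_n\|)+C$; hence $\{u_n\}$ is bounded. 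Then $u_n\rightharpoonup u$ along a subsequence, the compact embeddings make $\int_{\mathbb{R}^N}g|u_n|^{q-2}u_nP(u_n-u)\,dx$ and $\int_{\mathbb{R}^N}h|u_n|^{p-2}u_nP(u_n-u)\,dx$ tend to $0$, so $\langle\varphi'(u_n),P(u_n-u)\rangle\to0$ forces $\|P(u_n-u)\|\to0$, and likewise $\|Q(u_n-u)\|\to0$, whence $u_n\to u$ in $H^1(\mathbb{R}^N)$. Corollary 2.2 then provides critical points $u_k$ with $\varphi(u_k)<0=\varphi(0)$, so $u_k\ne0$, and $\varphi(u_k)\to0$. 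The main obstacles are the verification of $(B_2)$ over the infinite dimensional $Z_k$ and, above all, the boundedness of Palais--Smale sequences, where the competition between the concave weight $g$ and the convex weight $h$ in the strongly indefinite geometry has to be exploited carefully.
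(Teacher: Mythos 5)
Your proposal is correct and follows the paper's overall strategy: verify $(B_1)$--$(B_4)$ for the functional $\varphi$ and invoke Corollary 2.2, with the weighted compact embeddings (the paper's Lemma 3.1, applied to both $g$ and $h$), the quantities $\beta_k\to0$ (Lemma 3.2), the choice $\sigma_k\sim\beta_k^{q/(2-q)}$ for $(B_1)$ and $(B_3)$, and a Palais--Smale analysis that is essentially the paper's: your test function $Pu_n-Qu_n$ is just the paper's separate tests against $y_n$ and $-z_n$ added together, and your strong-convergence step via $\langle\varphi'(u_n),P(u_n-u)\rangle$ matches the paper's use of $\langle\varphi'(u_n)-\varphi'(u),y_n-y\rangle$. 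The genuine difference is in $(B_2)$, where $Z_k$ is infinite dimensional. The paper handles this by passing to $E_k$, the closure of $Z_k$ in $L^q(g,\mathbb{R}^N)$, asserting the existence of a continuous projection of $E_k$ onto the finite-dimensional piece $\bigoplus_{j=0}^k e_j$ (so that $\|y\|^q\le C|u|^q_{L^q(g)}$ by equivalence of norms there), and then estimating $\varphi(u)$ by a function of $\|y\|$ and $\|z\|$ alone; this is the Bartsch--Willem/Batkam--Colin trick, but the projection claim is stated without proof. You instead argue by dichotomy on $\|Pu\|$ versus $\|Qu\|$: when $\|Pu\|$ is small the negative definite part dominates, and when $\|Pu\|\ge\frac12\|u\|$ you show by scaling that $\int g|u|^q\,dx\ge m_k\|u\|^q$ with $m_k>0$, obtained by minimizing the weakly sequentially continuous functional $u\mapsto\int g|u|^q\,dx$ (compact embedding) over the bounded, weakly sequentially closed set $\{u\in Z_k:\|u\|\le1,\ \|Pu\|\ge\frac12\}$ and using $g>0$ a.e.\ from $(H_3)$. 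Your route is more self-contained, avoiding the unproved projection/equivalence-of-norms step, at the price of a $k$-dependent constant $m_k$, which is harmless since only the existence of some $s_k\in(0,\sigma_k)$ with $b^k<0$ is required for each fixed $k$; the paper's route, when the projection claim is granted, gives the negativity in a form closer to the classical dual fountain theorem.
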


\begin{theorem}
Assume that the conditions $(H_{1})$,$(H_{2})$,$(H_{3})$ and $(H_{4}')$ hold. Then
problem $(1.1)$ has a sequence of nontrival solutions \{$u_{k}$\} with $\varphi(u_{k})< 0$,
and $\varphi(u_{k})\rightarrow 0$ as $k\rightarrow \infty$ .
\end{theorem}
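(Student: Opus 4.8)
The plan is to verify the hypotheses of Corollary 2.1 (the dual generalized fountain theorem with the $(PS)_c$ condition) for the functional $\varphi$ given in $(3.7)$, working with the decomposition $X=Y\oplus Z$ induced by the spectral splitting of $-\Delta+V$. The even-ness, the $\mathcal{C}^1$ regularity, the $\tau$-lower semicontinuity (which follows because the quadratic part $\frac12\|Pu\|^2$ is $\tau$-lower semicontinuous while the remaining terms are $\tau$-continuous on bounded sets via the compact/continuous weighted embeddings from $(H_3)$--$(H_4)$) and the weak sequential continuity of $\varphi'$ are all essentially standard and I would dispatch them quickly, citing \cite{SK,BC1,BC2,BC4}. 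The substantive work is in establishing the three geometric conditions $(B_1)$--$(B_3)$ and the compactness condition $(B_4)$.

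For the geometry I would first record the key embedding estimates: from $(H_3)$, Hölder with exponent $q_0=\frac{2N}{2N-qN+2q}$ and Sobolev give $\int g|u|^q\le C\|g\|_{L^{q_0}}\|u\|^q$, and likewise $\int h|u|^p\le C\|h\|_{L^{p_0}}\|u\|^p$ from $(H_4)$, so on $Z$ (where the quadratic form is negative, $\varphi(u)=-\frac12\|u\|^2-\frac1q\int g|u|^q+\frac1p\int h|u|^p$) one gets $\varphi(u)\le -\frac12\|u\|^2+\frac Cp\|u\|^p$, which is $<0$ for $\|u\|=s_k$ small; since $p>2$ this holds with $s_k$ independent of $k$, giving $(B_2)$. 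For $(B_1)$ and $(B_3)$ I would use, in the spirit of the classical dual fountain argument of Bartsch--Willem, the quantity $\beta_k:=\sup_{u\in Y_k,\|u\|=1}\left(\int g|u|^q\right)^{1/q}$ and show $\beta_k\to0$ as $k\to\infty$ (this is where the compactness of the weighted embedding $H^1\hookrightarrow L^q(g,\mathbb{R}^N)$ enters: any weakly-null sequence in the nested subspaces $Y_k$ converges strongly in $L^q(g)$, forcing $\beta_k\to0$). Then on $Y_k$, $\varphi(u)\ge\frac12\|u\|^2-\frac{\beta_k^q}{q}\|u\|^q$ (discarding the nonnegative $h$-term), and choosing $\sigma_k$ as the maximizer of $t\mapsto\frac12 t^2-\frac{\beta_k^q}{q}t^q$ one has $\sigma_k\to0$, $a^k\ge 0$ for $k$ large, and $d^k=\inf_{\|u\|\le\sigma_k}\varphi(u)\ge -C\sigma_k^q\to0$; one also checks $\sigma_k>s_k$ holds for $k$ large since $\sigma_k$ is of order $\beta_k^{q/(2-q)}$ — wait, $\beta_k\to0$ with $q<2$ makes $\beta_k^{q/(2-q)}\to0$ too, so I must instead choose $s_k\to 0$ faster, e.g. take $s_k$ small enough at each $k$ that $\sigma_k>s_k$, which is possible since $(B_2)$ only needs $s_k$ small. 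This bookkeeping with the two radii is the delicate point of the geometry and I would lay it out carefully.

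The main obstacle is $(B_4)$, the $(PS)_c$ condition for $c\in[d^{k_0},0]$. Here I would first show any $(PS)_c$ sequence $\{u_n\}$ is bounded: from $\varphi(u_n)\to c$ and $\langle\varphi'(u_n),u_n\rangle\to0$, the combination $\varphi(u_n)-\frac1p\langle\varphi'(u_n),u_n\rangle$ controls $(\frac12-\frac1p)(\|Pu_n\|^2-\|Qu_n\|^2)+(\frac1p-\frac1q)\int g|u_n|^q$ — but because the form is indefinite this does not immediately bound the norm; instead I test with $Pu_n-Qu_n$ to get $\|u_n\|^2=\langle\varphi'(u_n),Pu_n-Qu_n\rangle+\int g|u_n|^{q-2}u_n(Pu_n-Qu_n)-\int h|u_n|^{p-2}u_n(Pu_n-Qu_n)$ and, combining the two identities and using the weighted Hölder estimates with $q<2$, deduce $\|u_n\|\le C(1+\|u_n\|^{q-1}+\text{lower order})$, hence boundedness. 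Then passing to a weak limit $u$, I use weak sequential continuity of $\varphi'$ to get $\varphi'(u)=0$; the crucial strong-convergence step is to write $\|Pu_n-Pu\|^2+\|Qu_n-Qu\|^2\le\langle\varphi'(u_n)-\varphi'(u),\,P(u_n-u)-Q(u_n-u)\rangle+o(1)$ and control the weighted nonlinear terms. For Theorem 3.1 this is where $(H_4)$ (with $h\in L^{p_0}$) gives the compact embedding $H^1\hookrightarrow L^p(h)$ directly — the decay of both weights kills any loss of mass at infinity — so all nonlinear terms converge and $u_n\to u$ strongly. For Theorem 3.2 with only $(H_4)'$ (so $h\in L^\infty$ with $h(x)\to0$), the $L^p(h)$ embedding is no longer compact and a nonnegative amount of mass could a priori escape to infinity; here I would invoke a Lions-type concentration-compactness argument, noting that since $g\in L^{q_0}$ still vanishes at infinity in an integrable sense, any "vanishing at $\infty$" profile would contribute zero to the $g$-term but a nonpositive amount overall, which is incompatible with $c<0$ — this rules out dichotomy/vanishing and forces strong convergence. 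Making this last exclusion rigorous, i.e. proving the variant concentration-compactness lemma adapted to the two vanishing weights, is the technical heart of the paper and the step I expect to require the most care.
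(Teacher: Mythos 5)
The decisive step of your plan for this theorem --- verifying $(B_4)$, i.e.\ the $(PS)_c$ condition under $(H_4)'$, by a concentration-compactness argument that ``rules out dichotomy/vanishing and forces strong convergence'' --- contains a genuine gap, and it is exactly the point where the paper goes a different way. Excluding vanishing of a $(PS)_c$ sequence only tells you that its weak limit is nontrivial; it does not yield norm convergence, so it does not establish $(B_4)$, and as written the implication ``no vanishing $\Rightarrow$ strong convergence'' is simply asserted. The paper does not prove $(PS)_c$ under $(H_4)'$ at all (it states explicitly that it cannot): it applies Theorem 2.1, which needs no compactness, to produce for each $k$ a $(PS)_{c_k}$ sequence with $c_k<0$, $c_k\to 0$, and then proves only that the weak limit of such a sequence is a nontrivial critical point. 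The mechanism is the localized non-vanishing Lemmas 3.6--3.7, in which the supremum is taken over centers $|y|<R_{\varepsilon}$ only, because $g\in L^{q_0}$ and $h(x)\to 0$ at infinity make the tails of both weighted integrals uniformly small: if $\limsup_n\sup_{|y|<R_\varepsilon}\int_{B(y,1)}|u_n|^2\,dx=0$, then both $\int g|u_n|^q$ and $\int h|u_n|^p$ are eventually $<|c|/3$ up to the factors $(\tfrac1q-\tfrac12)$, $(\tfrac12-\tfrac1p)$, and the identity $c=\lim\bigl(\varphi(u_n)-\tfrac12\langle\varphi'(u_n),u_n\rangle\bigr)$ forces $|c|\le\tfrac23|c|$, a contradiction since $c\neq0$. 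Because the non-vanishing centers stay in a fixed bounded region (translations are unavailable here --- the weights are not periodic), the weak limit itself is nonzero, and weak sequential continuity of $\varphi'$ makes it a nontrivial solution. Your sketch is missing this bridge in either direction: you neither localize the non-vanishing so as to conclude nontriviality of the (untranslated) weak limit, nor supply an argument upgrading non-vanishing to strong convergence.

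Two further remarks. First, your premise that under $(H_4)'$ ``the $L^p(h)$ embedding is no longer compact'' is in fact false: since $2<p<2^{\ast}$, $h\in L^{\infty}$ and $h(x)\to0$ as $|x|\to\infty$, splitting $\mathbb{R}^{N}$ into $B(0,R)$ and its complement and using the Rellich theorem inside and the smallness of $h$ outside shows that $H^{1}(\mathbb{R}^{N})\hookrightarrow L^{p}(h(x),\mathbb{R}^{N})$ is compact; so your route via Corollary 2.2 could actually be repaired by proving this compactness and repeating the $(PS)_c$ argument of Theorem 3.1 verbatim --- but that argument is not what you propose, and the vanishing-versus-$c<0$ reasoning you invoke cannot substitute for it. Second, your verification of $(B_2)$ is carried out on $Z$ rather than on $Z^{k}=\bigl(\bigoplus_{j=0}^{k}\mathbb{R}e_{j}\bigr)\oplus Z$, where the quadratic part is positive on the finite-dimensional component; negativity at radius $s_k$ there requires the concave $g$-term together with the equivalence of norms on the finite-dimensional piece and the continuous projection in $L^{q}(g(x),\mathbb{R}^{N})$, as in the paper's proof of Theorem 3.1, and $s_k$ must in any case be chosen below $\sigma_k\to0$.
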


First let us prove the following lemmas.
\begin{lemma}
Assume that $1<q<2^{\ast}$ and $g\in L^{q_{0}}(\mathbb{R}^{N})
\bigcap L^{\infty}(\mathbb{R}^{N})$ with $g(x)\geq 0$ a.e. in $\mathbb{R}^{N}$, where $q_{0}=\frac{2N}{2N-qN+2q}$. Then
$H^{1}(\mathbb{R}^{N})\hookrightarrow L^{q}(g(x),\mathbb{R}^{N}) $
and the embedding is compact.
\end{lemma}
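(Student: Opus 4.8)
The plan is to establish continuity by a single application of Hölder's inequality combined with the Sobolev embedding $H^{1}(\mathbb{R}^{N})\hookrightarrow L^{2^{*}}(\mathbb{R}^{N})$, and then to upgrade to compactness by a standard ``concentration cannot escape to infinity'' argument that exploits the decay of $g$ encoded in the hypothesis $g\in L^{q_{0}}(\mathbb{R}^{N})$.

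For continuity, let $r=\tfrac{q_{0}}{q_{0}-1}$ be the Hölder conjugate of $q_{0}$. A direct computation using $q_{0}=\tfrac{2N}{2N-qN+2q}$ gives $\tfrac{1}{r}=1-\tfrac{1}{q_{0}}=\tfrac{q(N-2)}{2N}$, hence $qr=2^{*}$; this is precisely where the assumption $q<2^{*}$ enters, ensuring $q_{0}\in(1,\infty)$. Then for $u\in H^{1}(\mathbb{R}^{N})$, Hölder's inequality yields $\int_{\mathbb{R}^{N}}|u|^{q}g\,dx\le \|g\|_{L^{q_{0}}}\big(\int_{\mathbb{R}^{N}}|u|^{qr}\,dx\big)^{1/r}=\|g\|_{L^{q_{0}}}\|u\|_{L^{2^{*}}}^{q}\le C\|u\|^{q}$, so that $H^{1}(\mathbb{R}^{N})\hookrightarrow L^{q}(g(x),\mathbb{R}^{N})$ continuously.

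For compactness, take $u_{n}\rightharpoonup u$ in $H^{1}(\mathbb{R}^{N})$; then $\{u_{n}\}$ is bounded in $H^{1}(\mathbb{R}^{N})$ and it suffices to show $\int_{\mathbb{R}^{N}}|u_{n}-u|^{q}g\,dx\to 0$. Fix $\varepsilon>0$. Since $g\in L^{q_{0}}(\mathbb{R}^{N})$, choose $R>0$ with $\big(\int_{|x|\ge R}g^{q_{0}}\,dx\big)^{1/q_{0}}<\varepsilon$; the same Hölder estimate as above, restricted to $\{|x|\ge R\}$, gives $\int_{|x|\ge R}|u_{n}-u|^{q}g\,dx\le \varepsilon\,\|u_{n}-u\|_{L^{2^{*}}}^{q}\le C\varepsilon$ uniformly in $n$. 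On $B(0,R)$, the embedding $H^{1}(B(0,R))\hookrightarrow L^{q}(B(0,R))$ is compact by the Rellich--Kondrachov theorem (here $q<2^{*}$), and since $u_{n}\rightharpoonup u$ in $H^{1}(B(0,R))$ we conclude, by the usual subsequence-of-every-subsequence argument, that $u_{n}\to u$ in $L^{q}(B(0,R))$; using $g\in L^{\infty}(\mathbb{R}^{N})$, $\int_{B(0,R)}|u_{n}-u|^{q}g\,dx\le \|g\|_{L^{\infty}}\int_{B(0,R)}|u_{n}-u|^{q}\,dx\to 0$. Hence $\limsup_{n}\int_{\mathbb{R}^{N}}|u_{n}-u|^{q}g\,dx\le C\varepsilon$, and letting $\varepsilon\to 0$ completes the proof.

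The Hölder bookkeeping and the invocation of Rellich--Kondrachov are routine; the one genuine point is the uniform smallness of the tail $\int_{|x|\ge R}|u_{n}-u|^{q}g\,dx$, i.e. the observation that the failure of compactness of $H^{1}(\mathbb{R}^{N})\hookrightarrow L^{q}(\mathbb{R}^{N})$ is precisely remedied by the fact that the weight $g$ carries arbitrarily small $L^{q_{0}}$-mass near infinity. I expect this, rather than any computation, to be the conceptual crux.
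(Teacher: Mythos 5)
Your proof is correct and follows essentially the same route as the paper: Hölder with exponent $q_{0}$ plus the Sobolev embedding for continuity, then the splitting into a large ball (Rellich--Kondrachov and $g\in L^{\infty}$) and its complement (small $L^{q_{0}}$-tail of $g$ plus Hölder) for compactness. The only cosmetic difference is that the paper phrases the compactness step as a Cauchy-sequence argument in $L^{q}(g(x),\mathbb{R}^{N})$ rather than identifying the limit with the weak limit $u$, which is equivalent.
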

\begin{proof}
For $u\in H^{1}(\mathbb{R}^{n})$, from the $H\ddot{o}lder$ inequality and Sobolev inequality we have
\begin{eqnarray*}
  \int_{\mathbb{R}^{N}}g(x)| u |^{q} dx &\leq& | g(x)|_{L^{q_{0}}}\cdot \left(\int_{\mathbb{R}^{N}}| u |^{\frac{2N}{N-2}} dx\right)^{\frac{qN-2q}{2N}}\\
   &=&   | g(x)|_{L^{q_{0}}}\cdot | u | ^{q}_{L^{2^{\ast}}(\mathbb{R}^{N})} \\
   &\leq&  C \| u \|^{q}.
\end{eqnarray*}
So we have
\begin{equation*}
    | u |_{L^{q}(g(x),\mathbb{R}^{N})}\leq  C \| u \| ,
\end{equation*}
\\which means that
\begin{equation*}
    H^{1}(\mathbb{R}^{N})\hookrightarrow L^{q}(g(x),\mathbb{R}^{N}).
\end{equation*}
Assume that $\{u_{n}\}$ is a bounded sequence in $H^{1}(\mathbb{R}^{n})$,
so it is bounded in $L^{2^{\ast}}(\mathbb{R}^{n})$
and there exists a weak convergent subsequence denoted by $\{u_{n}\}$ also,
according to the Rellich imbedding theorem, when restrict to a bounded domain $\Omega$,
$\{u_{n}\}$ is strongly convergent in $L^{q}(\Omega)$ .

We choose $R>0$ sufficiently large such that for $\varepsilon >0$, there exists  $ M>0 $  such that  if $m,n>M$, we have
\begin{equation*}
    \int_{B(0,R)} | u_{n}-u_{m}|^{q} dx <\frac{\varepsilon}{2| g|_{L^{\infty}}}
\end{equation*}
and
\begin{equation*}
    \left(\int_{\mathbb{R}^{N} \backslash B(0,R)}| g(x) |^{q_{0}} dx\right)^{\frac{1}{q_{0}}}
<\frac{\varepsilon}{4 sup_{n>0} \{| u_{n}|_{L^{2^{\ast}}(\mathbb{R}^{N})} \} }.
\end{equation*}
So,
\begin{eqnarray*}
  \int_{\mathbb{R}^{N}} g(x)| u_{n}-u_{m}|^{q}dx &=& \int_{\mathbb{R}^{N}\backslash B(0,R)} g(x)| u_{n}-u_{m}|^{q}dx
+\int_{B(0,R)} g(x)| u_{n}-u_{m}|^{q}dx \\
   & <& \left(\int_{\mathbb{R}^{N} \backslash B(0,R)}| g(x) |^{q_{0}} dx\right)^{\frac{1}{q_{0}}} | u_{n}-u_{m}|_{L^{2^{\ast}}(\mathbb{R}^{N})}\\
& & + | g|_{L^{\infty}(\mathbb{R}^{N})} \int_{B(0,R)} | u_{n}-u_{m}|^{q} dx  \\
   & <&  \frac{\varepsilon}{2} + \frac{\varepsilon}{2} \\
   & = & \varepsilon .
\end{eqnarray*}
This means that $\{u_{n}\}$ is a Cauchy sequence in $L^{q}(g(x),\mathbb{R}^{N})$, thus complete the proof.
\end{proof}

\begin{lemma}
Under the assumptions of Lemma 3.1, we define:
\\$$\beta_{k}:=sup_{u\in Y_{k},\| u \|=1} | u |_{L^{q}(g(x),\mathbb{R}^{N})} ,$$
\\then  $$\beta_{k}\rightarrow 0  ,   k\rightarrow\infty. $$
\end{lemma}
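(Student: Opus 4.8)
The plan is to argue by contradiction, exploiting the compactness of the embedding $H^1(\mathbb{R}^N)\hookrightarrow L^q(g(x),\mathbb{R}^N)$ established in Lemma 3.1 together with the fact that $Y_k$ is a decreasing sequence of closed subspaces whose intersection is trivial. First I would observe that $\beta_k$ is nonnegative and nonincreasing in $k$ (since $Y_{k+1}\subset Y_k$), hence $\beta_k\to\beta\geq 0$ for some limit $\beta$. It suffices to show $\beta=0$.

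Suppose instead $\beta>0$. For each $k$ choose, by definition of the supremum, a vector $u_k\in Y_k$ with $\|u_k\|=1$ and $|u_k|_{L^q(g(x),\mathbb{R}^N)}\geq\beta_k/2\geq\beta/2>0$. The sequence $\{u_k\}$ is bounded in $X=H^1(\mathbb{R}^N)$, so after passing to a subsequence we may assume $u_k\rightharpoonup u$ weakly in $H^1(\mathbb{R}^N)$. The key point is to identify the weak limit: for any fixed basis element $e_j$ of $Y$, once $k>j$ we have $u_k\in Y_k\perp e_j$, so $(u_k,e_j)=0$; passing to the weak limit gives $(u,e_j)=0$ for every $j$, hence $u\perp Y$. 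On the other hand $u_k\in Y_k\subset Y$ and $Y$ is weakly closed (being a closed subspace), so $u\in Y$. Therefore $u=0$.

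Now I invoke Lemma 3.1: the embedding $H^1(\mathbb{R}^N)\hookrightarrow L^q(g(x),\mathbb{R}^N)$ is compact, so weak convergence $u_k\rightharpoonup 0$ in $H^1$ forces strong convergence $u_k\to 0$ in $L^q(g(x),\mathbb{R}^N)$, i.e. $|u_k|_{L^q(g(x),\mathbb{R}^N)}\to 0$. This contradicts $|u_k|_{L^q(g(x),\mathbb{R}^N)}\geq\beta/2>0$. Hence $\beta=0$, that is $\beta_k\to 0$ as $k\to\infty$, which completes the proof.

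The main obstacle is the identification of the weak limit as $0$; everything else is a routine application of compactness. The argument above handles this via testing against the orthonormal basis $\{e_j\}$ of $Y$, using that each $u_k$ for $k$ large is orthogonal to any given $e_j$, and that $Y$ itself is weakly closed — the standard Bartsch-type trick. One should be slightly careful that the norm appearing in the definition of $Y_k$ and of $\beta_k$ is the equivalent norm $\|\cdot\|$ introduced in this section (with $\{e_j\}$ orthonormal for it), but since $\|\cdot\|$ and $\|\cdot\|_1$ are equivalent, boundedness in one is boundedness in the other and the compact embedding from Lemma 3.1 still applies verbatim.
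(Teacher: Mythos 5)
Your proposal is correct and follows essentially the same route as the paper: monotonicity of $\beta_k$, choice of almost-maximizers $u_k\in Y_k$ with $\|u_k\|=1$, the observation that $u_k\rightharpoonup 0$, and the compact embedding of Lemma 3.1 to force $|u_k|_{L^q(g(x),\mathbb{R}^N)}\to 0$. You merely spell out the identification of the weak limit (which the paper leaves as ``by the definition of $Y_k$'') and phrase it as a contradiction, so the arguments coincide in substance.
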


\begin{proof}
It is clear that $0<\beta_{k+1}\leq\beta_{k}$, so that $\beta_{k} \rightarrow\beta \geq 0$,$k \rightarrow \infty$.
For every $k\geq0$, there exists $u_{k}\in Z_{k}$ such that $\| u_{k} \|=1 $ and $| u_{k}|_{L^{q}(g(x),\mathbb{R}^{N})}>\frac{\beta_{k}}{2}$.
By the definition of $Y_{k}$, $u_{k}\rightharpoonup 0$ in $H^{1}(\mathbb{R}^{N})$. Thus Lemma 3.1 implies that
 $u_{k}\rightarrow 0$ in $L^{q}(g(x),\mathbb{R}^{N})$. So we proved that $\beta_{k}\rightarrow0$ as $k\rightarrow\infty$.
\end{proof}

\begin{lemma}
Under the assumptions $(H_{1})$ $(H_{2})$ $(H_{3})$ and $(H_{4})$ or $(H_{4})'$, the functional $\varphi$ defined in $(3.1)$ (or $(3.5)$) is $\tau-lower$ semicontinuous,
and $\varphi'$ is weakly sequentially continuous.
\end{lemma}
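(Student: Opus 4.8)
The plan is to split $\varphi=\varphi_{0}-\psi$, where $\varphi_{0}(u)=\frac{1}{2}(\|Pu\|^{2}-\|Qu\|^{2})$ and $\psi(u)=\frac{1}{q}\int_{\mathbb{R}^{N}}g(x)|u|^{q}\,dx-\frac{1}{p}\int_{\mathbb{R}^{N}}h(x)|u|^{p}\,dx$, and to treat the quadratic part and the lower-order part separately, the crucial analytic input being the compact embeddings provided by Lemma 3.1. Since the $\tau$-topology is the topology of the norm $\|\cdot\|_{\tau}$ it is metrizable, so for the $\tau$-lower semicontinuity it suffices to prove that $\{u:\varphi(u)\le C\}$ is sequentially $\tau$-closed for each $C\in\mathbb{R}$. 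So let $u_{n}\to u$ in the $\tau$-topology with $\varphi(u_{n})\le C$. First I would show $\{u_{n}\}$ is bounded in $H^{1}(\mathbb{R}^{N})$: from $\|u_{n}-u\|_{\tau}\to0$ one gets $\|Qu_{n}-Qu\|\to0$, so $\{\|Qu_{n}\|\}$ is bounded; if $\|u_{n}\|\to\infty$ along a subsequence then $\|Pu_{n}\|\to\infty$ there, and since $\frac{1}{p}\int h|u_{n}|^{p}\ge0$ and $\int g|u_{n}|^{q}\le C\|u_{n}\|^{q}$ by Lemma 3.1, it follows that $\varphi(u_{n})\ge\frac{1}{2}\|Pu_{n}\|^{2}-\frac{1}{2}\|Qu_{n}\|^{2}-\frac{C}{q}\|u_{n}\|^{q}\to+\infty$ (here $q<2$ is used), contradicting $\varphi(u_{n})\le C$. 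Hence $\{u_{n}\}$ is bounded, so $Pu_{n}\rightharpoonup Pu$, $Qu_{n}\to Qu$ in norm, and therefore $u_{n}\rightharpoonup u$ in $H^{1}(\mathbb{R}^{N})$. Then $\frac{1}{2}\|Pu\|^{2}\le\liminf\frac{1}{2}\|Pu_{n}\|^{2}$ by weak lower semicontinuity of the norm, $\frac{1}{2}\|Qu_{n}\|^{2}\to\frac{1}{2}\|Qu\|^{2}$, and $\psi(u_{n})\to\psi(u)$ by the weak sequential continuity of $\psi$ established below; summing these yields $\varphi(u)\le\liminf\varphi(u_{n})\le C$.

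To see that $\psi(u_{n})\to\psi(u)$ whenever $u_{n}\rightharpoonup u$ in $H^{1}(\mathbb{R}^{N})$ I would use the compactness of the weighted embeddings. Under $(H_{3})$, Lemma 3.1 gives that $H^{1}(\mathbb{R}^{N})\hookrightarrow L^{q}(g(x),\mathbb{R}^{N})$ is compact; under $(H_{4})$, Lemma 3.1 applied with $(p,h)$ in place of $(q,g)$ gives that $H^{1}(\mathbb{R}^{N})\hookrightarrow L^{p}(h(x),\mathbb{R}^{N})$ is compact; and under $(H_{4})'$ the same compactness follows from splitting $\mathbb{R}^{N}$ into $B(0,R)$, where one uses the Rellich embedding theorem together with $h\in L^{\infty}$, and its complement, where one uses $\sup_{|x|>R}h(x)\to0$ and the bound $|u_{n}|_{L^{p}(\mathbb{R}^{N})}\le C\|u_{n}\|$ (valid since $2<p<2^{\ast}$). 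In every case $u_{n}\rightharpoonup u$ in $H^{1}(\mathbb{R}^{N})$ implies $u_{n}\to u$ in $L^{q}(g(x),\mathbb{R}^{N})$ and in $L^{p}(h(x),\mathbb{R}^{N})$, whence $\int g|u_{n}|^{q}\to\int g|u|^{q}$ and $\int h|u_{n}|^{p}\to\int h|u|^{p}$, so $\psi(u_{n})\to\psi(u)$.

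Finally, for the weak sequential continuity of $\varphi'$, recall that $\langle\varphi'(u),\phi\rangle=(Pu-Qu,\phi)-\int_{\mathbb{R}^{N}}g|u|^{q-2}u\phi\,dx+\int_{\mathbb{R}^{N}}h|u|^{p-2}u\phi\,dx$. If $u_{n}\rightharpoonup u$ in $H^{1}(\mathbb{R}^{N})$ then $(Pu_{n}-Qu_{n},\phi)\to(Pu-Qu,\phi)$ since $P-Q$ is bounded and linear. For the remaining two terms I would use the continuity of the Nemytskii operator $v\mapsto|v|^{q-2}v$ from $L^{q}(g(x),\mathbb{R}^{N})$ to $L^{q/(q-1)}(g(x),\mathbb{R}^{N})$: since $u_{n}\to u$ in $L^{q}(g(x),\mathbb{R}^{N})$ we get $|u_{n}|^{q-2}u_{n}\to|u|^{q-2}u$ in $L^{q/(q-1)}(g(x),\mathbb{R}^{N})$, and pairing with $\phi\in L^{q}(g(x),\mathbb{R}^{N})$ through H\"older's inequality gives $\int g|u_{n}|^{q-2}u_{n}\phi\to\int g|u|^{q-2}u\phi$; the $p$-term is handled the same way with $(p,h)$ in place of $(q,g)$. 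Hence $\langle\varphi'(u_{n}),\phi\rangle\to\langle\varphi'(u),\phi\rangle$ for every $\phi\in H^{1}(\mathbb{R}^{N})$, i.e. $\varphi'(u_{n})\rightharpoonup\varphi'(u)$. I expect the only delicate point to be this passage from weak $H^{1}$-convergence to strong convergence of the weighted $L^{q}$- and $L^{p}$-integrals, which is exactly where the compactness of Lemma 3.1 --- and, under $(H_{4})'$, the decay of $h$ at infinity --- is indispensable; the quadratic part and the reduction to bounded sequences are routine.
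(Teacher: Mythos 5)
Your proposal is correct, and for the $\tau$-lower semicontinuity it follows the same strategy as the paper: on a sublevel set, $\tau$-convergence gives strong convergence of the $Z$-components, the inequality $\varphi(u_n)\le C$ together with the embedding of Lemma 3.1 and $q<2$ forces boundedness of the $Y$-components (you bound $\int g|u_n|^q\le C\|u_n\|^q$ directly, which is simpler than the paper's Jensen-inequality detour), and then one passes to the weak limit using weak lower semicontinuity of the norm and Lemma 3.1. Where you genuinely diverge is in the treatment of the nonlinear terms. The paper only needs convergence of the $g$-term plus weak lower semicontinuity of the convex $h$-term, and its proof of the weak sequential continuity of $\varphi'$ is essentially the assertion that boundedness of $\{\varphi'(u_n)\}$ plus $L^2_{loc}$-convergence suffices — which, as written, leaves the identification of the weak limit implicit (one still has to test against compactly supported functions). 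You instead prove full weak-to-strong continuity of both weighted integrals: under $(H_4)$ by applying Lemma 3.1 with $(p,h)$, and under $(H_4)'$ by a direct compactness argument (Rellich on $B(0,R)$ plus the tail estimate $\int_{|x|>R}h|u_n-u|^p\le\sup_{|x|>R}h\cdot C$, using $H^1\hookrightarrow L^p(\mathbb{R}^N)$), and you then get $\varphi'(u_n)\rightharpoonup\varphi'(u)$ from Nemytskii-operator continuity and H\"older in the weighted spaces. This buys a complete limit-identification argument that the paper's terse proof lacks, and it covers the $(H_4)'$ case explicitly (the paper's proof of this part only invokes $(H_3)$ and $(H_4)$). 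It also has an interesting by-product the authors do not exploit: since $(H_4)'$ already yields compactness of $H^1\hookrightarrow L^p(h,\mathbb{R}^N)$, the same Palais--Smale argument used for Theorem 3.1 would in fact run under $(H_4)'$, so your observation is strictly more than the lemma requires; the paper's shorter route avoids this machinery because, for lower semicontinuity alone, Fatou-type weak lower semicontinuity of the $h$-term suffices.
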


\begin{proof}

 Let $\{{u_{n}}\} \subset X$ and $c\in \mathbb{R}$ such that :
$ u_{n}\rightarrow u$ in $\tau-topology$ and  $\varphi(u_{n})\leq c$.
We write $u_{n}=y_{n}+z_{n}$ , where $y_{n} \in Y ,z_{n} \in Z $.
From the definition of $\tau-topology$, we can see that $z_{n}\rightarrow z$.
\begin{eqnarray*}
  c\geq \varphi(u_{n}) &=& \frac{1}{2}\| y_{n}\|^{2}-\frac{1}{2}\| z_{n}\|^{2}
-\frac{1}{q}| u_{n}|^{q}_{L^{q}(g(x),\mathbb{R}^{N})} + \frac{1}{p}| u_{n}|^{p}_{L^{p}(h(x),\mathbb{R}^{N})}\\
   &\geq & \frac{1}{2}\| y_{n}\|^{2}-\frac{1}{2}\| z_{n}\|^{2}
-\frac{1}{q}| u_{n}|^{q}_{L^{q}(g(x),\mathbb{R}^{N})}.
\end{eqnarray*}
\\ Now we use the Jensen inequality
\begin{eqnarray*}
  | u_{n}|^{q}_{L^{q}(g(x),\mathbb{R}^{N})} &=& | y_{n} + z_{n}|^{q}_{L^{q}(g(x),\mathbb{R}^{N})} \\
   &\leq& ( | y_{n}|_{L^{q}(g(x),\mathbb{R}^{N})}+| z_{n}|_{L^{q}(g(x),\mathbb{R}^{N})})^{q} \\
   &\leq&  2^{q-1} (| y_{n}|^{q}_{L^{q}(g(x),\mathbb{R}^{N})}+| z_{n}|^{q}_{L^{q}(g(x),\mathbb{R}^{N})}),
\end{eqnarray*}
with the embedding $ H^{1}(\mathbb{R}^{N}) \hookrightarrow L^{q}(g(x),\mathbb{R}^{N})$, we have
\begin{eqnarray*}
   c &\geq&  \frac{1}{2}\| y_{n}\|^{2}-\frac{1}{2}\| z_{n}\|^{2}
-\frac{2^{q-1}}{q} (| y_{n}|^{q}_{L^{q}(g(x),\mathbb{R}^{N})}+| z_{n}|^{q}_{L^{q}(g(x),\mathbb{R}^{N})})\\
    &\geq& \frac{1}{2}\| y_{n}\|^{2}-\frac{1}{2}\| z_{n}\|^{2}
- \frac{2^{q-1}}{q} (\| y_{n}\|^{q}+\| z_{n}\|^{q}).
 \end{eqnarray*}
We can see that $\{{y_{n}}\}$ is also bounded, thus $\{u_{n}\}$ is bounded in $H^{1}(\mathbb{R}^{N})$ .
So there exists a subsequence and we also denote it by $\{u_{n}\}$ such that $u_{n}\rightharpoonup u $.
From Lemma 3.1 we have
$$| u_{n}|_{L^{q}(g(x),\mathbb{R}^{n})} \rightarrow | u|_{L^{q}(g(x),\mathbb{R}^{n})},$$
and with the weakly lower semicontinuity of the norm, we have
$$c \geq \varphi (u),$$
thus  $\varphi$  is $\tau-lower$ semicontinuous .

Now let us prove that $ \varphi'$ is weakly sequentially continuous. Assume that
$u_{n}\rightharpoonup u $ in $H^{1}(\mathbb{R}^{N})$.
Then $u_{n}\rightarrow u$ in $L^{2}_{loc}(\mathbb{R}^{N})$
By  $(H_{3})$, $(H_{4})$ and $H\ddot{o}lder$ inequality, we can get that
$\{\varphi'(u_{n})\}$ is bounded, so that  $\varphi'(u_{n}) \rightharpoonup \varphi'(u) $.
\end{proof}

When the condition $(H_{4})'$ holds we can not get the (PS) condition. In order to prove that the solution with nonzero energy must be nontrivial, we need the following
concentration-compactness lemma, and let us show the interpolation inequality for $L^{p}(g(x),\mathbb{R}^{N})$ first.

\begin{lemma}
$($interpolation inequality for $L^{p}(g(x),\mathbb{R}^{N})$ $)$Assume that $1\leq s \leq r\leq t\leq\infty$,
$g \in L^{\infty}(\mathbb{R}^{N})$ with $g(x)\geq 0$ a.e. in $\mathbb{R}^{N}$ and
$\frac{1}{r}=\frac{\theta}{s}+\frac{1-\theta}{t}$.
If $ u \in L^{s}(g(x),\mathbb{R}^{N})\bigcap L^{t}(g(x),\mathbb{R}^{N})$,
then $ u \in L^{r}(g(x),\mathbb{R}^{N})$, and
$$| u|_{L^{r}(g(x),\mathbb{R}^{N})}\leq| u|^{\theta}_{L^{s}(g(x),\mathbb{R}^{N})}
| u|^{1-\theta}_{L^{t}(g(x),\mathbb{R}^{N})}.$$
\end{lemma}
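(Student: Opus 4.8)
The plan is to reduce the weighted interpolation inequality to the classical (unweighted) H\"older inequality by absorbing the weight into the functions themselves. Observe that $|u|_{L^{r}(g,\mathbb{R}^{N})}^{r}=\int_{\mathbb{R}^{N}}|u|^{r}g\,dx=\int_{\mathbb{R}^{N}}\bigl(|u|g^{1/s}\bigr)^{r\theta}\bigl(|u|g^{1/t}\bigr)^{r(1-\theta)}\,dx$, where the identity of exponents follows from writing $g=g^{\theta}g^{1-\theta}$ and then distributing $g^{\theta}$ as $(g^{1/s})^{s\theta}$ raised to the appropriate power, using the defining relation $\tfrac{1}{r}=\tfrac{\theta}{s}+\tfrac{1-\theta}{t}$, i.e. $r\theta/s+r(1-\theta)/t=1$. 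So the first step is just this algebraic rearrangement of exponents; it is the only place the hypothesis on $\theta,r,s,t$ is used, and one must be slightly careful when $s$ or $t$ equals $\infty$ (in which case $g^{1/s}$ or $g^{1/t}$ is read as $1$ and the corresponding factor disappears, matching the convention $1/\infty=0$).

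Second, I would apply the ordinary H\"older inequality with conjugate exponents $1/\theta$ and $1/(1-\theta)$ (these are $\geq 1$ and conjugate precisely because $r\theta/s+r(1-\theta)/t=1$ forces the two pieces to have total exponent $1$). This gives
\[
\int_{\mathbb{R}^{N}}\bigl(|u|g^{1/s}\bigr)^{r\theta}\bigl(|u|g^{1/t}\bigr)^{r(1-\theta)}\,dx
\leq\Bigl(\int_{\mathbb{R}^{N}}|u|^{s}g\,dx\Bigr)^{r\theta/s}\Bigl(\int_{\mathbb{R}^{N}}|u|^{t}g\,dx\Bigr)^{r(1-\theta)/t},
\]
since $(|u|g^{1/s})^{r\theta\cdot(1/\theta)}=|u|^{r}g^{r/s}$—wait, here one must instead raise to the power that recovers $|u|^{s}g$, so the correct bookkeeping is to apply H\"older with exponents $\frac{s}{r\theta}$ and $\frac{t}{r(1-\theta)}$, which are conjugate by the same relation; I would present it that way to keep the arithmetic transparent. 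Taking $r$-th roots yields exactly $|u|_{L^{r}(g,\mathbb{R}^{N})}\leq|u|_{L^{s}(g,\mathbb{R}^{N})}^{\theta}|u|_{L^{t}(g,\mathbb{R}^{N})}^{1-\theta}$, and in particular $u\in L^{r}(g,\mathbb{R}^{N})$ because the right-hand side is finite by hypothesis.

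There is no real obstacle here: the statement is the classical Lyapunov/interpolation inequality applied to the measure $g(x)\,dx$ (which is a legitimate, possibly infinite but $\sigma$-finite, Borel measure since $g\geq 0$), and everything goes through verbatim. The only points deserving a sentence of care are the endpoint cases $s=1$ or $t=\infty$ and the choice of which pair of conjugate exponents to name, so that the powers of the integrals come out to be exactly $\theta$ and $1-\theta$ after the final $r$-th root. I would therefore keep the proof to these two displayed lines plus a remark on the $t=\infty$ convention.
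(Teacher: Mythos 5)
Your proposal is correct and follows essentially the same route as the paper: factor $g(x)|u|^{r}$ as $\bigl(g^{\theta r/s}|u|^{\theta r}\bigr)\bigl(g^{(1-\theta)r/t}|u|^{(1-\theta)r}\bigr)$, apply H\"older with the conjugate exponents $\tfrac{s}{r\theta}$ and $\tfrac{t}{r(1-\theta)}$ (your corrected choice, which is exactly the paper's), and take $r$-th roots. Your added remarks on the $s=1$, $t=\infty$ conventions and the measure-theoretic viewpoint ($g\,dx$ as a measure) are fine but not needed beyond a sentence.
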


\begin{proof}
Using $H\ddot{o}lder$ inequality, we have
\begin{eqnarray*}
  | u|^{r}_{L^{r}(g(x),\mathbb{R}^{N})}&=& \int_{\mathbb{R}^{N}}g(x)| u|^{r}dx\\
 &=&  \int_{\mathbb{R}^{N}}(g(x)^{\frac{\theta r}{s}}| u|^{\theta r})(g(x)^{\frac{(1-\theta) r}{t}}| u|^{(1-\theta) r})dx\\
   &\leq&  (\int_{\mathbb{R}^{N}}g(x)| u|^{s}dx)^{\frac{\theta r}{s}}
 (\int_{\mathbb{R}^{N}}g(x)| u|^{t}dx)^{\frac{(1-\theta)r}{t}}\\
   &=& | u|^{\theta r}_{L^{s}(g(x),\mathbb{R}^{N})}
| u|^{(1-\theta)r}_{L^{t}(g(x),\mathbb{R}^{N})}.
\end{eqnarray*}
\end{proof}

The idea of the following lemmas come from P.L.Lions (see chapter1 of\cite{MW})and we will prove it completely for readers' convenience.

\begin{lemma}
(concentration-compactness) Let $r>0$ and $2\leq q<2^{\ast}$.If $\{u_{n}\}$ is bounded in $H^{1}(\mathbb{R}^{N})$,
and if $sup_{y\in\mathbb{R}^{N}}\int_{B(y,r)}| u_{n}|^{q} dx \rightarrow 0 $ as $n \rightarrow \infty$, $g\in L^{q_{0}}(\mathbb{R}^{N})\bigcap
L^{\infty}(\mathbb{R}^{N})$ with $g(x)\geq0$,
where $q_{0}=\frac{2N}{2N-qN+2q}$.
Then
$u_{n}\rightarrow 0$ in $L^{p}(g(x),\mathbb{R}^{N})$, for $max\{1,\frac{(N-2)q}{N}\}<p<2^{\ast}$.
\end{lemma}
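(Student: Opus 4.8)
The plan is to follow the standard Lions-type vanishing argument, but to run it in the weighted space $L^{p}(g(x),\mathbb{R}^{N})$ rather than in an unweighted Lebesgue space, using the interpolation inequality of Lemma 3.4 together with the compact/continuous embeddings already established. First I would fix $p$ with $\max\{1,\frac{(N-2)q}{N}\}<p<2^{\ast}$ and choose an exponent triple $(s,r,t)$ appropriate for Lemma 3.4: take $s=q$ (the exponent appearing in the vanishing hypothesis), take $t$ with $r=p$ lying strictly between $q$ and $t$, and pick $t<2^{\ast}$ so that $t$ is in the Sobolev range; the condition $p>\frac{(N-2)q}{N}$ is exactly what guarantees such a $t<2^{\ast}$ exists, while $p>q$ would be needed if $q<p$, and $p>1$ handles the remaining case. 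Then Lemma 3.4 gives
\[
| u_{n}|_{L^{p}(g(x),\mathbb{R}^{N})}\leq | u_{n}|^{\theta}_{L^{q}(g(x),\mathbb{R}^{N})}\,| u_{n}|^{1-\theta}_{L^{t}(g(x),\mathbb{R}^{N})},
\]
with $\frac{1}{p}=\frac{\theta}{q}+\frac{1-\theta}{t}$, so it suffices to show the first factor tends to $0$ while the second stays bounded.

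The boundedness of $| u_{n}|_{L^{t}(g(x),\mathbb{R}^{N})}$ follows from the continuous embedding $H^{1}(\mathbb{R}^{N})\hookrightarrow L^{t}(g(x),\mathbb{R}^{N})$ (Lemma 3.1, valid since $1<t<2^{\ast}$ and $g\in L^{t_0}\cap L^{\infty}$ — here one needs $g\in L^{\infty}$, which is assumed) and the boundedness of $\{u_n\}$ in $H^{1}$. The heart of the matter is showing $| u_{n}|_{L^{q}(g(x),\mathbb{R}^{N})}\to 0$. Since $g\in L^{\infty}$, we have $| u_{n}|^{q}_{L^{q}(g(x),\mathbb{R}^{N})}\leq | g|_{L^{\infty}}\int_{\mathbb{R}^{N}}| u_{n}|^{q}dx$, so it is enough to prove $u_n\to 0$ in the unweighted $L^{q}(\mathbb{R}^{N})$. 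This is the classical Lions lemma: cover $\mathbb{R}^{N}$ by a countable family of balls $B(y_i,r)$ of bounded overlap, and on each ball use the Gagliardo–Nirenberg/Sobolev interpolation
\[
\int_{B(y_i,r)}| u_{n}|^{q}dx\leq C\Big(\sup_{y}\int_{B(y,r)}| u_{n}|^{q}dx\Big)^{1-\lambda}\Big(\int_{B(y_i,r)}(| u_{n}|^{2}+| \nabla u_{n}|^{2})dx\Big)^{\lambda},
\]
for a suitable $\lambda\in(0,1)$ depending on $q$ and $N$ (again the restriction on $q$, namely $q<2^{\ast}$ and $q\geq 2$, is what makes this local interpolation available — actually one interpolates between $L^{2}$ and $L^{2^{\ast}}$, so one needs $2\leq q\leq 2^{\ast}$). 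Summing over $i$ and using the finite overlap and the $H^{1}$-bound gives $\int_{\mathbb{R}^{N}}| u_{n}|^{q}dx\leq C(\sup_{y}\int_{B(y,r)}| u_{n}|^{q}dx)^{1-\lambda}\to 0$ by hypothesis.

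Combining: $| u_{n}|_{L^{p}(g(x),\mathbb{R}^{N})}\leq C\,| u_{n}|^{\theta}_{L^{q}(g(x),\mathbb{R}^{N})}\to 0$, which is the claim. The main obstacle — really the only delicate point — is the bookkeeping of exponents: one must verify that the hypothesis $\max\{1,\frac{(N-2)q}{N}\}<p<2^{\ast}$ precisely permits a valid choice of the interpolation parameter $\theta\in(0,1)$ with both endpoint exponents $q$ and $t$ falling in the ranges where Lemma 3.1 and the local Lions interpolation apply. I would state and check this exponent computation carefully (distinguishing the cases $p\geq q$ and $p<q$, the latter forcing us to interpolate downward so that $q$ is the large endpoint), and otherwise the argument is routine.
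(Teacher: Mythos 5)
There is a genuine gap, and it sits at the heart of your argument: the claim that the vanishing hypothesis yields $u_n\to 0$ in the \emph{unweighted} $L^{q}(\mathbb{R}^{N})$ with the \emph{same} exponent $q$ as in the hypothesis. Your local inequality
\[
\int_{B(y_i,r)}| u_{n}|^{q}dx\leq C\Bigl(\sup_{y}\int_{B(y,r)}| u_{n}|^{q}dx\Bigr)^{1-\lambda}\Bigl(\int_{B(y_i,r)}(| u_{n}|^{2}+| \nabla u_{n}|^{2})dx\Bigr)^{\lambda}
\]
cannot hold with $\lambda\in(0,1]$ unless $q=2$: replacing $u_n$ by $cu_n$ forces $q=q(1-\lambda)+2\lambda$, i.e. $\lambda(q-2)=0$. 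When $q=2$ the inequality is trivially true but useless, because the local $H^{1}$ terms then carry the exponent $\lambda<1$ and $\sum_i a_i^{\lambda}$ is not controlled by $(\sum_i a_i)^{\lambda}$, so the covering argument does not close. Worse, the statement you are reducing to is simply false for $q=2$ --- exactly the case needed later in Lemmas 3.6 and 3.7: the spreading sequence $u_n(x)=n^{-N/2}\phi(x/n)$, $\phi\in C_c^{\infty}$, is bounded in $H^{1}$, satisfies $\sup_{y}\int_{B(y,r)}|u_n|^{2}dx\to 0$, yet $|u_n|_{L^{2}}$ is constant. The Lions argument only gives vanishing in $L^{s}$ for exponents $s$ strictly inside $(2,2^{\ast})$; this is why the paper first fixes one auxiliary exponent $s\in(q,2^{\ast})$ chosen so that $\theta s=2$ (the $H^{1}$ factor then enters with power exactly $1$, so the sum over the cover is controlled by $\|u_n\|^{2}$), proves $u_n\to0$ in the unweighted $L^{s}$ for that single $s$, and only afterwards transfers the information to the weighted norms by interpolation.

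A second, related defect is that your proof never uses $g\in L^{q_{0}}$, only $g\in L^{\infty}$; but the integrability of the weight is indispensable for the range $p<2$ (with $g\equiv 1\in L^{\infty}$ the conclusion fails outright, since a bounded $H^{1}$ sequence need not even lie in $L^{p}$ for $p<2$). Concretely, in your case $p<q$ you bound the non-vanishing endpoint $|u_n|_{L^{t}(g(x),\mathbb{R}^{N})}$ by invoking Lemma 3.1 with ``$g\in L^{t_{0}}$''; for $t<q$ one has $t_{0}<q_{0}$, and $g\in L^{q_{0}}\cap L^{\infty}$ does \emph{not} give $g\in L^{t_{0}}$. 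The paper instead bounds $|u_n|_{L^{t}(g(x),\mathbb{R}^{N})}$ by H\"older against $g\in L^{q_{0}}$ followed by Sobolev, which works precisely when $\frac{(N-2)q}{N}\leq t\leq q$; this is where the restriction $p>\max\{1,\frac{(N-2)q}{N}\}$ actually enters. So the correct architecture is: vanishing in one unweighted $L^{s}$ with $s\in(q,2^{\ast})$; then weighted interpolation upward against $L^{2^{\ast}}$ (using $g\in L^{\infty}$ and the Sobolev bound) for $s<p<2^{\ast}$, and downward against $L^{t}(g(x),\mathbb{R}^{N})$ (using $g\in L^{q_{0}}$) for $\max\{1,\frac{(N-2)q}{N}\}<p<s$. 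Your interpolation framework (Lemma 3.4) is the right tool, but the endpoints you chose cannot be justified as stated.
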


\begin{proof}
Let $q<s<2^{\ast}$, using the interpolation lemma we have
\begin{eqnarray*}
  | u|_{L^{s}(B(y,r))} &\leq& | u|^{1-\theta}_{L^{q}(B(y,r))} | u|^{\theta}_{L^{2^{\ast}}(B(y,r))} \\
   &\leq& C | u|^{1-\theta}_{L^{q}(B(y,r))}  [\int_{B(y,r)}(| u|^{2}+| \nabla u |^{2}) dx]^{\frac{\theta}{2}},
\end{eqnarray*}
where $\theta=\frac{s-q}{2^{\ast}-q}\frac{2^{\ast}}{s}$. Choosing $s=\frac{2}{\theta}$ and it is easy
to see that this $s$ is valid. So we get
$$\int_{B(y,r)}| u |^{s} dx \leq C | u|^{(1-\theta)s}_{L^{q}(B(y,r))}
\int_{B(y,r)}(| u|^{2}+| \nabla u |^{2}) dx.$$
Now covering $\mathbb{R}^{N}$ by balls of radius $r$, in such a way that each point of $\mathbb{R}^{N}$
is contained in at most $N+1$ balls, we find that
$$\int_{\mathbb{R}^{n}}| u |^{s} dx \leq C(N+1)\int_{\mathbb{R}^{n}}(| u|^{2}+| \nabla u |^{2})dx\cdot  sup_{y\in\mathbb{R}^{n}}\left(\int_{B(y,r)}| u |^{q} \right)^{\frac{(1-\theta)s}{q}}
.$$
Thus
$u_{n}\rightarrow 0$ in $L^{s}(\mathbb{R}^{N})$.
And for $g\in L^{\infty}(\mathbb{R}^{N})$, we have
$u_{n}\rightarrow 0$ in $L^{s}(g(x),\mathbb{R}^{N})$.

For $s<p<2^{\ast}$,by the preceding lemma we have
\begin{equation*}
    | u\mid_{L^{p}(g(x),\mathbb{R}^{n})} \leq | u\mid^{\alpha}_{L^{s}(g(x),\mathbb{R}^{n})}
| u|^{1-\alpha}_{L^{2^{\ast}}(g(x),\mathbb{R}^{N})} <  C | u|^{\alpha}_{L^{s}(g(x),\mathbb{R}^{N})}
| u|^{1-\alpha}_{L^{2^{\ast}}(\mathbb{R}^{N})},
\end{equation*}
where $\alpha=\frac{(2^{\ast}-p)s}{(2^{\ast}-s)p}$.
So we have $u_{n}\rightarrow 0$ in $L^{p}(g(x),\mathbb{R}^{N})$ , when $s<p<2^{\ast}$.

For $max\{1,\frac{(N-2)q}{N}\}<p<s$, we choose $t\in(max\{1,\frac{(N-2)q}{N}\},s)$. Similarly we have
\begin{eqnarray*}
  | u|_{L^{p}(g(x),\mathbb{R}^{N})} &\leq& | u|^{\beta}_{L^{t}(g(x),\mathbb{R}^{N})}
| u|^{1-\beta}_{L^{s}(g(x),\mathbb{R}^{N})} \\
  &=&  \left(\int_{\mathbb{R}^{N}} g(x)| u |^{t} dx\right)^{\frac{\beta}{t}}
| u|^{1-\beta}_{L^{s}(g(x),\mathbb{R}^{n})}\\
  & \leq &  \left(| g(x) |_{L^{q_{0}}} \int_{\mathbb{R}^{N}}| u |^{t\frac{2N}{qN-2q}}dx\right)^{\frac{\beta}{t}}
| u|^{1-\beta}_{L^{s}(g(x),\mathbb{R}^{N})}\\
& \leq &  \left(| g(x) |_{L^{q_{0}}} \| u \|^{\frac{qN-2q}{2Nt}}\right)^{\frac{\beta}{t}}
| u|^{1-\beta}_{L^{s}(g(x),\mathbb{R}^{N})},
\end{eqnarray*}
where $\beta=\frac{(s-p)t}{(s-t)p}.$
So we have $u_{n}\rightarrow 0$ in $L^{p}(g(x),\mathbb{R}^{n})$ , when $max\{1,\frac{(N-2)q}{N}\}<p<s$.
Thus the proof is complete.
\\
\end{proof}

\begin{lemma}
Let $r>0$, $\{u_{n}\}$ is bounded in $H^{1}(\mathbb{R}^{N})$,
$g\in L^{q_{0}}(\mathbb{R}^{N})\bigcap L^{\infty}(\mathbb{R}^{N})$ with $g(x)>0$,
where $q_{0}=\frac{2N}{2N-qN+2q}$.
For any $\varepsilon>0$ there exists a positive $R_{\varepsilon}<\infty$ such that if $sup_{|y|<R_{\varepsilon}}\int_{B(y,r)}| u_{n}|^{2} dx \rightarrow 0 $ as $n \rightarrow \infty$.
Then
$lim_{n\rightarrow\infty}|u_{n}|^{q}_{L^{q}(g(x),\mathbb{R}^{N})}<\varepsilon$, for $1<q<2$.
\end{lemma}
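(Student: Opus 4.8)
The plan is to split the weighted integral $\int_{\mathbb{R}^{N}}g(x)|u_{n}|^{q}\,dx$ into its part over a large ball $B(0,R_{\varepsilon})$ and its part over the complement, choosing $R_{\varepsilon}$ so that the exterior contribution is uniformly (in $n$) small by the $L^{q_{0}}$-integrability of $g$, and then killing the interior contribution by means of the local vanishing hypothesis. This is the weighted analogue of Lions' argument alluded to in the introduction: where the weight decays at infinity, mass cannot escape there.

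First I would fix $\varepsilon>0$ and set $C_{0}:=\sup_{n}|u_{n}|^{q}_{L^{2^{\ast}}(\mathbb{R}^{N})}$, which is finite since $\{u_{n}\}$ is bounded in $H^{1}(\mathbb{R}^{N})$, hence in $L^{2^{\ast}}(\mathbb{R}^{N})$ by the Sobolev embedding. Hölder's inequality with the conjugate exponents $q_{0}$ and $q_{0}'=\frac{2^{\ast}}{q}$ (so that $q q_{0}'=2^{\ast}$, exactly the bookkeeping of Lemma 3.1; note $q_{0}\in(1,\infty)$ because $N\geq3$ and $1<q<2<2^{\ast}$) gives, for every $R>0$ and every $n$,
\[
\int_{\mathbb{R}^{N}\setminus B(0,R)} g(x)|u_{n}|^{q}\,dx
\le \Big(\int_{\mathbb{R}^{N}\setminus B(0,R)}|g|^{q_{0}}\,dx\Big)^{1/q_{0}} |u_{n}|^{q}_{L^{2^{\ast}}(\mathbb{R}^{N})}
\le C_{0}\Big(\int_{\mathbb{R}^{N}\setminus B(0,R)}|g|^{q_{0}}\,dx\Big)^{1/q_{0}}.
\]
Since $g\in L^{q_{0}}(\mathbb{R}^{N})$, the tail $\int_{\mathbb{R}^{N}\setminus B(0,R)}|g|^{q_{0}}\,dx\to 0$ as $R\to\infty$, so I fix $R_{\varepsilon}<\infty$ so large that the right-hand side is $<\varepsilon/2$ for all $n$. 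Crucially this choice depends only on $\varepsilon$, $g$ and the number $C_{0}$, not on the fine behaviour of the sequence, which is exactly what the order of quantifiers in the statement requires.

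Next, assume the vanishing hypothesis $\sup_{|y|<R_{\varepsilon}}\int_{B(y,r)}|u_{n}|^{2}\,dx\to 0$ holds for this $R_{\varepsilon}$. Cover the bounded set $B(0,R_{\varepsilon})$ by finitely many balls $B(y_{1},r),\dots,B(y_{M},r)$ with $|y_{i}|<R_{\varepsilon}$; then $\int_{B(0,R_{\varepsilon})}|u_{n}|^{2}\,dx\le\sum_{i=1}^{M}\int_{B(y_{i},r)}|u_{n}|^{2}\,dx\to 0$. Since $1<q<2$, Hölder's inequality on the bounded domain $B(0,R_{\varepsilon})$ yields $\int_{B(0,R_{\varepsilon})}|u_{n}|^{q}\,dx\le |B(0,R_{\varepsilon})|^{1-q/2}\big(\int_{B(0,R_{\varepsilon})}|u_{n}|^{2}\,dx\big)^{q/2}\to 0$, whence $\int_{B(0,R_{\varepsilon})} g(x)|u_{n}|^{q}\,dx\le |g|_{L^{\infty}}\int_{B(0,R_{\varepsilon})}|u_{n}|^{q}\,dx\to 0$. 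Adding the two estimates gives $\limsup_{n\to\infty}|u_{n}|^{q}_{L^{q}(g(x),\mathbb{R}^{N})}\le \varepsilon/2<\varepsilon$, which is the assertion.

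The only genuinely delicate point is the bookkeeping with the quantifiers: $R_{\varepsilon}$ must be pinned down before the vanishing hypothesis is used, so it cannot depend on $n$, and this forces the exterior region to be controlled solely through the uniform bound $C_{0}$ together with $g\in L^{q_{0}}$. Everything else — the finite subcover and two applications of Hölder's inequality — is routine.
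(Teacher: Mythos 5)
Your proof is correct, and its first half (the exterior estimate) is exactly the paper's: split $\int_{\mathbb{R}^{N}}g|u_{n}|^{q}$ at a ball $B(0,R_{\varepsilon})$ and control the tail by H\"older with exponents $q_{0}$ and $2^{\ast}/q$ together with the uniform $L^{2^{\ast}}$ bound, so that $R_{\varepsilon}$ depends only on $\varepsilon$, $g$ and that bound — you handle the quantifier order more carefully than the paper does. Where you genuinely diverge is the interior term: the paper disposes of it by invoking its concentration--compactness Lemma 3.5, whose statement is formulated with $\sup_{y\in\mathbb{R}^{N}}$ and with the exponent restrictions $2\le q$, $\max\{1,(N-2)q/N\}<p<2^{\ast}$, so its use here really requires a tacit localization to $B(0,R_{\varepsilon})$; you instead cover $B(0,R_{\varepsilon})$ by finitely many balls $B(y_{i},r)$ with $|y_{i}|<R_{\varepsilon}$, deduce $\int_{B(0,R_{\varepsilon})}|u_{n}|^{2}\to 0$ from the vanishing hypothesis, and then use H\"older on the bounded set (possible precisely because $1<q<2$) plus $g\in L^{\infty}$ to send the interior weighted integral to zero. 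This buys a shorter, self-contained argument that avoids the interpolation machinery and its exponent bookkeeping, and in fact gives the slightly stronger conclusion $\limsup_{n}\,|u_{n}|^{q}_{L^{q}(g(x),\mathbb{R}^{N})}\le\varepsilon/2$; the paper's route, on the other hand, reuses Lemma 3.5 uniformly for both Lemma 3.6 and Lemma 3.7 (where $p>2$ makes the elementary H\"older shortcut unavailable), which is why it is phrased that way there.
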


\begin{proof}
For any $\varepsilon>0$, we can get from $H\ddot{o}lder$ inequality and the boundedness of $\{u_{n}\}$ that there exists a positive $R_{\varepsilon}<\infty$ such that
\begin{equation*}
     \int_{\mathbb{R}^{N}\backslash B(0,R_{\varepsilon})}g(x)|u_{n}|^{q}dx<\frac{\varepsilon}{2}.
\end{equation*}
From Lemma 3.5 we can see easily that if $sup_{|y|<R_{\varepsilon}}\int_{B(y,r)}| u_{n}|^{2} dx \rightarrow 0$, as $n \rightarrow \infty$,
\begin{equation*}
    lim_{n\rightarrow\infty} \int_{B(0,R_{\varepsilon})}g(x)|u_{n}|^{q}dx<\frac{\varepsilon}{2}.
\end{equation*}
So
\begin{equation*}
    lim_{n\rightarrow\infty} \int_{\mathbb{R}^{N}}g(x)|u_{n}|^{q}dx<\varepsilon.
\end{equation*}
\end{proof}

\begin{lemma}
Let $r>0$, $\{u_{n}\}$ is bounded in $H^{1}(\mathbb{R}^{N})$,
$h\in L^{\infty}(\mathbb{R}^{N})$ with $h(x)\geq0$, and $h(x)\rightarrow0$ as $|x|\rightarrow\infty$.
For any $\varepsilon>0$ there exists a positive $R_{\varepsilon}<\infty$ such that if $sup_{|y|<R_{\varepsilon}}\int_{B(y,r)}| u_{n}|^{2} dx \rightarrow 0 $ as $n \rightarrow \infty$.
Then
$lim_{n\rightarrow\infty}|u_{n}|^{p}_{L^{p}(h(x),\mathbb{R}^{N})}<\varepsilon$, for $2<p<2^{\ast}$.
\end{lemma}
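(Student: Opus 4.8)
The plan is to follow the scheme of Lemma 3.6: split $\mathbb{R}^{N}=B(0,R_{\varepsilon})\cup(\mathbb{R}^{N}\backslash B(0,R_{\varepsilon}))$ and estimate the two pieces of $\int_{\mathbb{R}^{N}}h(x)|u_{n}|^{p}dx$ separately. On the far piece the smallness will come from the decay $h(x)\to0$ as $|x|\to\infty$, which replaces the role that $g\in L^{q_{0}}$ played in Lemma 3.6; on the near piece it will come, as before, from the concentration--compactness mechanism of Lemma 3.5, now applied with $q=2$.

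First I would fix $\varepsilon>0$. Since $2<p<2^{\ast}$, the Sobolev embedding $H^{1}(\mathbb{R}^{N})\hookrightarrow L^{p}(\mathbb{R}^{N})$ and the boundedness of $\{u_{n}\}$ give $\int_{\mathbb{R}^{N}}|u_{n}|^{p}dx\le C$ for all $n$, with $C$ independent of $n$. Because $h(x)\to0$ at infinity I can then pick $R_{\varepsilon}<\infty$ with $\sup_{|x|\ge R_{\varepsilon}}h(x)<\varepsilon/(2C)$, so that
\begin{equation*}
\int_{\mathbb{R}^{N}\backslash B(0,R_{\varepsilon})}h(x)|u_{n}|^{p}dx\le\Big(\sup_{|x|\ge R_{\varepsilon}}h(x)\Big)\int_{\mathbb{R}^{N}}|u_{n}|^{p}dx<\frac{\varepsilon}{2}
\end{equation*}
for every $n$. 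Note that this $R_{\varepsilon}$ is fixed before the hypothesis on the local $L^{2}$ masses is used, which matches the order of quantifiers in the statement.

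Next, with this $R_{\varepsilon}$ fixed, I assume $\sup_{|y|<R_{\varepsilon}}\int_{B(y,r)}|u_{n}|^{2}dx\to0$ and rerun the estimate in the proof of Lemma 3.5 with $q=2$ (interpolation exponent $s=2+\frac{4}{N}\in(2,2^{\ast})$): for each ball $B(y,r)$ one has $\int_{B(y,r)}|u_{n}|^{s}\le C|u_{n}|_{L^{2}(B(y,r))}^{(1-\theta)s}\int_{B(y,r)}(|u_{n}|^{2}+|\nabla u_{n}|^{2})$ with $\theta=2/s$. Covering the bounded ball $B(0,R_{\varepsilon})$ by finitely many balls of radius $r$ whose centres lie in $B(0,R_{\varepsilon})$, with overlap at most $N+1$, summation gives $\int_{B(0,R_{\varepsilon})}|u_{n}|^{s}\le C(N+1)\big(\sup_{|y|<R_{\varepsilon}}\int_{B(y,r)}|u_{n}|^{2}\big)^{(1-\theta)s/2}\|u_{n}\|^{2}\to0$, i.e. $u_{n}\to0$ in $L^{s}(B(0,R_{\varepsilon}))$. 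Since $B(0,R_{\varepsilon})$ has finite measure, either H\"older's inequality (when $p\le s$) or the interpolation inequality together with the uniform bound on $|u_{n}|_{L^{2^{\ast}}}$ (when $s<p<2^{\ast}$) upgrades this to $\int_{B(0,R_{\varepsilon})}|u_{n}|^{p}dx\to0$; as $h\in L^{\infty}(\mathbb{R}^{N})$ this gives $\lim_{n\to\infty}\int_{B(0,R_{\varepsilon})}h(x)|u_{n}|^{p}dx=0<\varepsilon/2$. Adding the two pieces yields $\lim_{n\to\infty}|u_{n}|^{p}_{L^{p}(h(x),\mathbb{R}^{N})}<\varepsilon$.

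The main point needing care is the interior estimate: Lemma 3.5 is stated with the supremum over all of $\mathbb{R}^{N}$, while here only balls centred in $B(0,R_{\varepsilon})$ are controlled, so it cannot be quoted verbatim and the covering argument must be re-localised to the bounded ball. This is unproblematic precisely because $B(0,R_{\varepsilon})$ is bounded and can be covered by finitely many $r$-balls with centres inside it, so only the restriction $u_{n}|_{B(0,R_{\varepsilon})}$ enters the estimate. Everything else (H\"older, interpolation, the Sobolev bound) is routine.
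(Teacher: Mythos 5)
Your proof is correct and follows essentially the same route as the paper's: split the integral at a ball $B(0,R_{\varepsilon})$, use the decay of $h$ together with the uniform $L^{p}$ bound for the exterior piece, and rerun the covering/interpolation argument of Lemma 3.5 (localised to $B(0,R_{\varepsilon})$) for the interior piece. You merely make explicit two steps the paper leaves implicit, namely the choice of $R_{\varepsilon}$ via $\sup_{|x|\ge R_{\varepsilon}}h$ and the re-localisation of the concentration--compactness estimate to balls centred in $B(0,R_{\varepsilon})$.
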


\begin{proof}
For any $\varepsilon>0$, we can get from the boundedness of $\{u_{n}\}$ that there exists a positive $R_{\varepsilon}<\infty$ such that
\begin{equation*}
     \int_{\mathbb{R}^{N}\backslash B(0,R_{\varepsilon})}h(x)|u_{n}|^{p}dx<\frac{\varepsilon}{2}.
\end{equation*}
From the proof of Lemma 3.5 we can see that if $sup_{|y|<R_{\varepsilon}}\int_{B(y,r)}| u_{n}|^{2} dx \rightarrow 0$, as $n \rightarrow \infty$,
\begin{equation*}
    lim_{n\rightarrow\infty} \int_{B(0,R_{\varepsilon})}h(x)|u_{n}|^{p}dx<\frac{\varepsilon}{2}.
\end{equation*}
So
\begin{equation*}
    lim_{n\rightarrow\infty} \int_{\mathbb{R}^{N}}h(x)|u_{n}|^{p}dx<\varepsilon.
\end{equation*}
\end{proof}

Now we are able to prove Theorem 3.1 .

\begin{proof}
\\First let us verify the conditions :
\\$(B_{1})$ $a^{k}:=inf_{u\in Y^{k},\|u\|=\sigma_{k}}\varphi(u)\geq 0$,
\\$(B_{2})$ $b^{k}:=sup_{u\in Z^{k},\|u\|=s_{k}}\varphi(u)<0$,
\\$(B_{3})$ $d^{k}:=inf_{u\in Y^{k},\|u\|\leq \sigma_{k}}\varphi(u)\rightarrow 0,k\rightarrow\infty$.
\\
\\We write $u=y+z$, where $y \in Y ,z \in Z $. For every $u\in Y_{k}$, $y=u, z=0$.
\\From Lemma 3.1 and Lemma 3.2 we can see that
\begin{eqnarray*}
  \varphi(u) &=&\frac{1}{2}(\| y \|^{2}-\| z \|^{2})
    - \frac{1}{q}\int_{\mathbb{R}^{N}} g(x)|u | ^{q}dx
     +\frac{1}{p}\int_{\mathbb{R}^{N}} h(x)| u | ^{p}dx \\
   &=& \frac{1}{2}\parallel u \parallel^{2}
    - \frac{1}{q}| u|^{q}_{L^{q}(g(x),\mathbb{R}^{N})}
     +\frac{1}{p}  | u|^{p}_{L^{p}(h(x),\mathbb{R}^{N})}\\
   &\geq& \frac{1}{2}\| u \|^{2}
    - \frac{1}{q}| u|^{q}_{L^{q}(g(x),\mathbb{R}^{N})}\\
&\geq&  \frac{1}{2}\| u \|^{2}-\frac{1}{q}\beta^{q}_{k}\| u \|^{q}.
\end{eqnarray*}
Let $\sigma_{k}= (\frac{4\beta^{q}_{k}}{q})^{\frac{1}{2-q}}$,we get that
$$a^{k}:=inf_{u\in Y^{k},\|u\|=\sigma_{k}}\varphi(u)\geq 0,$$
and it is easy to see from Lemma 3.2 that $\sigma_{k}\rightarrow 0$ as $k\rightarrow\infty$.

Now for $u\in Z^{k}$, we use the Jensen inequality,
\begin{eqnarray*}
  \varphi(u) &=&\frac{1}{2}(\| y \|^{2}-\| z \|^{2})
    - \frac{1}{q}\int_{\mathbb{R}^{N}} g(x)| u | ^{q}dx
     +\frac{1}{p}\int_{\mathbb{R}^{N}} h(x)|u | ^{p}dx \\
   &=& \frac{1}{2}\| y \|^{2}-\frac{1}{2}\| z \|^{2}
    - \frac{1}{q}| u|^{q}_{L^{q}(h(x),\mathbb{R}^{N})}
     +\frac{1}{p}  | u|^{p}_{L^{p}(h(x),\mathbb{R}^{N})}\\
   &\leq& \frac{1}{2}\| y \|^{2}-\frac{1}{2}\| z \|^{2}
    - \frac{1}{q}| u|^{q}_{L^{q}(g(x),\mathbb{R}^{N})}
+\frac{2^{p-1}}{p}(| y|^{p}_{L^{p}(h(x),\mathbb{R}^{N})}+| z|^{p}_{L^{p}(h(x),\mathbb{R}^{N})}).
\end{eqnarray*}
Since the Sobolev space $H^{1}(\mathbb{R}^{N})$ embeds continuously in $L^{q}(g(x), \mathbb{R}^{N})$, we denote
$E_{k}$ the closure of $Z^{k}$ in $L^{q}(g(x),\mathbb{R}^{N})$, then there exists a continuous projection of $E_{k}$
on $\bigoplus_{j=0}^{k}e_{j}$, thus there exists a constant $C>0$ such that
$$| y|^{q}_{L^{q}(g(x),\mathbb{R}^{N})}<C| u|^{q}_{L^{q}(g(x),\mathbb{R}^{N})},$$
and in a finite-dimensional vector space all norms are equivalent, we have for some $C>0$
$$\| y \|^{q}<C | y|^{q}_{L^{q}(g(x),\mathbb{R}^{N})},$$
thus
\begin{eqnarray*}
  \varphi(u) &<& \frac{1}{2}\| y \|^{2}-\frac{1}{2}\| z \|^{2}
    - C\| y\|^{q}
+\frac{2^{p-1}}{p}(| y|^{p}_{L^{p}(h(x),\mathbb{R}^{N})}+| z|^{p}_{L^{p}(h(x),\mathbb{R}^{N})}) \\
   &<&  (\frac{1}{2}\|y\|^{2}-C\| y\|^{q} +C\| y\|^{p})
-\frac{1}{2}\| z\|^{2}  + C \| z\|^{p}.
\end{eqnarray*}
So we choose $s_{k}$ sufficiently small, it is easy to see
$$b^{k}:=sup_{u\in Z^{k},\|u\|=s_{k}}\varphi(u)<0.$$
\\
We know that for every $u\in Y_{k}$
$$ \varphi(u)\geq-\frac{1}{q}\beta_{k}\| u \|^{q}$$
and
$$\beta_{k} , \sigma_{k}\rightarrow 0  , k\rightarrow\infty$$
we get
$$d^{k}:=inf_{u\in Y^{k},\|u\|\leq \sigma_{k}}\varphi(u)\rightarrow 0,k\rightarrow\infty$$
thus condition$(B_{1})$,$(B_{2})$ and $(B_{3})$ of Theorem 2.1 is proved.
\\

Now let us show that any sequence $\{u_{n}\}$ such that  $\varphi(u_{n})\rightarrow c $ and $\varphi'(u_{n})\rightarrow 0$
is bounded in $H^{1}(\mathbb{R}^{N})$.

For $n$ big enough, we have
\begin{eqnarray*}
  \| u_{n}\| -c+1 &>&  \frac{1}{2}\langle \varphi'(u_{n}),u_{n}\rangle -\varphi(u_{n})\\
   &=&  (\frac{1}{2}-\frac{1}{p})\int_{\mathbb{R}^{N}}h(x)| u|^{p}dx
+(\frac{1}{q}-\frac{1}{2}) \int_{\mathbb{R}^{N}}g(x)| u|^{q}dx,
\end{eqnarray*}
thus
\begin{equation}\label{5}
    \int_{\mathbb{R}^{N}}h(x)| u|^{p}dx<C+\| u_{n}\|.
\end{equation}

\begin{equation*}
    \| y_{n}\| \geq \langle \varphi'(u_{n}),y_{n}\rangle
  =  \| y_{n}\|^{2}-\int_{\mathbb{R}^{N}}g(x)| u|^{q-2}u  y_{n} dx
+\int_{\mathbb{R}^{N}}h(x)| u|^{p-2}u y_{n} dx,
\end{equation*}
thus
\begin{equation*}
    \| y_{n}\|^{2}\leq \| y_{n}\|+\int_{\mathbb{R}^{N}}g(x)| u|^{q-1}  y_{n} dx
+ \int_{\mathbb{R}^{N}}h(x)| u|^{p-1} y_{n} dx.
\end{equation*}
Using $H\ddot{o}lder$ inequality and $(3.5)$ we have, for some $C>0$

\begin{eqnarray*}
  \| y_{n}\|^{2} & \leq& \| y_{n}\|+
|g(x)^{\frac{q-1}{q}}u_{n}^{q-1}|_{L^{\frac{q}{q-1}}} |g(x)^{\frac{1}{q}}y_{n}^{q-1}|_{L^{\frac{q}{q-1}}}
+|h(x)^{\frac{p-1}{p}}u_{n}^{p-1}|_{L^{\frac{p}{p-1}}} |h(x)^{\frac{1}{p}}y_{n}^{p-1}|_{L^{\frac{p}{p-1}}}\\
   &=& \| y_{n}\|+
| u_{n} |^{q-1}_{L^{q}(g(x),\mathbb{R}^{N})} | y_{n} |_{L^{q}(g(x),\mathbb{R}^{N})}
+[\int_{\mathbb{R}^{N}}h(x)|u|^{p} dx]^{\frac{p-1}{p}} |y_{n}|_{L^{p}(g(x),\mathbb{R}^{N})}\\
   &\leq& \|y_{n}\| + C\|u_{n}\|^{q-1}\| z_{n}\|
+ C(1+\|u_{n}\|)^{\frac{p-1}{p}} \| z_{n}\|\\
   &\leq&\| u_{n}\| + C\| u_{n}\|^{q}
+ C(1+\|u_{n}\|)^{\frac{p-1}{p}} \| u_{n}\|.
\end{eqnarray*}
Similarly we can get from $\| z
_{n}\| \geq -\langle \varphi'(u_{n}),z_{n}\rangle$ that
\begin{eqnarray*}
  \| z_{n}\|^{2} & \leq& \| z_{n}\|+
|g(x)^{\frac{q-1}{q}}u_{n}^{q-1}|_{L^{\frac{q}{q-1}}}|g(x)^{\frac{1}{q}}z_{n}^{q-1}|_{L^{\frac{q}{q-1}}}
+|h(x)^{\frac{p-1}{p}}u_{n}^{p-1}|_{L^{\frac{p}{p-1}}}|h(x)^{\frac{1}{p}}z_{n}^{p-1}|_{L^{\frac{p}{p-1}}}\\
   &\leq& \| u_{n}\| + C\| u_{n}\|^{q}
+ C (1+\| u_{n}\|)^{\frac{p-1}{p}} \| u_{n}\|.
\end{eqnarray*}
For $\|u_{n}\|^{2}=\| y_{n}\|^{2}+\| z_{n}\|^{2}$ ,
we have
$$  \| u_{n}\|^{2} \leq \| u_{n}\| + C\| u_{n}\|^{q}
+ C (1+\| u_{n}\|)^{\frac{p-1}{p}} \| u_{n}\|,$$
thus $\{u_{n}\}$ is bounded in $H^{1}(\mathbb{R}^{N})$.
\\

We write $u=y+z$ and $u_{n}=y_{n}+z_{n}$, where $y$, $y_{n}\in Y$ and $z$, $z_{n}\in Z$,
so that
\begin{center}
    $\langle \varphi'(u_{n})-\varphi'(u),y_{n}-y\rangle\rightarrow 0$  as $n\rightarrow\infty$.
\end{center}
By the boundedness of $\{u_{n}\}$, we may assume, up to a subsequence, that
\begin{center}
    $y_{n}\rightharpoonup y$ in $H^{1}(\mathbb{R}^{N}),$
\end{center}
\begin{center}
    $z_{n} \rightharpoonup z$ in $H^{1}(\mathbb{R}^{N}).$
\end{center}
\begin{eqnarray*}
  \langle \varphi'(u_{n})-\varphi'(u),y_{n}-y\rangle=\|y_{n}-y\|^{2} &+&
\int_{\mathbb{R}^{N}}g(x)(|u|^{q-2}u-|u_{n}|^{q-2}u_{n})(y_{n}-y)dx \\
   &-& \int_{\mathbb{R}^{N}}h(x)(|u|^{p-2}u-|u_{n}|^{p-2}u_{n})(y_{n}-y)dx.
\end{eqnarray*}

Using the $H\ddot{o}lder$ inequality we can get that
\begin{center}
    $y_{n} \rightarrow y$ in $H^{1}(\mathbb{R}^{N}),$
\end{center}
similarly we have
\begin{center}
    $z_{n}\rightarrow z$ in $H^{1}(\mathbb{R}^{N}),$
\end{center}
so
\begin{center}
    $u_{n}\rightarrow u$ in $H^{1}(\mathbb{R}^{N}).$
\end{center}
Thus the $(PS)_{c}$ condition holds for all $c\neq 0$.
and we get the conclusion we need from Corollary2.2.
\end{proof}
\\

In order to prove Theorem 3.2, we need only to show that the weak limit of $\{u_{n}\}$ is nontrivial.

\begin{proof}

Now let $\varepsilon=min \{\frac{2|c|q}{3(2-q)},\frac{2|c|p}{3(p-2)}\} $ and $\delta:=\overline{lim}_{n\rightarrow\infty}sup_{|y|<R_{\varepsilon}}
\int_{B(y,1)}| u_{n} |^{2}dx =0$
we get from Lemma 3.6 and Lemma 3.7 that
$$lim_{n\rightarrow\infty}\int_{\mathbb{R}^{N}}g(x)| u_{n}|^{q}dx<\frac{|c|}{3},$$
and
$$lim_{n\rightarrow\infty}\int_{\mathbb{R}^{N}}h(x)| u_{n}|^{p}dx<\frac{|c|}{3}.$$
So
\begin{eqnarray*}
  |c| &=& lim_{n\rightarrow\infty}|\varphi(u_{n})-\frac{1}{2}\langle\varphi'(u_{n}), u_{n}\rangle| \\
   &\leq&lim_{n\rightarrow\infty} |(\frac{1}{2}-\frac{1}{q})\int_{\mathbb{R}^{N}}g(x)| u_{n}|^{q}dx| +
lim_{n\rightarrow\infty}|(\frac{1}{2}-\frac{1}{p})\int_{\mathbb{R}^{N}}h(x)| u_{n}|^{q}dx|\\
   &\leq& \frac{2|c|}{3}.
\end{eqnarray*}
This is a contradiction. Thus $\delta>0$ and the weak limit of $\{u_{n}\}$ is nontrivial.
We can get the conclusion easily from the weakly sequentially continuity of $\varphi'$.

\end{proof}

\end{document}